\numberwithin{equation}{section}
\theoremstyle{plain}	     
\newtheorem{thm}{Theorem}[section] 
\newtheorem{cor}[thm]{Corollary}
\newtheorem{lem}[thm]{Lemma}
\theoremstyle{definition} 
\newtheorem{exmp}[thm]{Example}
\theoremstyle{remark} 
\newtheorem{rem}[thm]{Remark}
\newcommand{\sn}{\operatorname{sn}}
\newcommand{\cn}{\operatorname{cn}}
\newcommand{\dn}{\operatorname{dn}}
\newcommand{\am}{\operatorname{am}}
\newcommand{\R}{\mathbb{R}}
\begin{document}

\title[generalized Jacobi elliptic functions]{Properties of generalized Jacobi elliptic functions with three parameters}

\author{Hajime Sato}
\address[Hajime Sato]{Department of Mathematical Sciences, Shibaura Institute of Technology, 307 Fukasaku, Minuma-ku, Saitama-shi, Saitama 337-8570, Japan}

\author{Nagi Suzuki}
\address[Nagi Suzuki]{Department of Mathematical Sciences, Shibaura Institute of Technology, 307 Fukasaku, Minuma-ku, Saitama-shi, Saitama 337-8570, Japan}

\author{Shingo Takeuchi}
\address[Shingo Takeuchi]{Department of Mathematical Sciences, Shibaura Institute of Technology, 307 Fukasaku, Minuma-ku, Saitama-shi, Saitama 337-8570, Japan}
\email[Corresponding author]{shingo@shibaura-it.ac.jp}
\thanks{The work of S. Takeuchi was supported by JSPS KAKENHI Grant Number 22K03392.}

\dedicatory{Dedicated to Professor Yoshitsugu Kabeya 
on the occasion of his 60th birthday}
\subjclass{33B10 (26A42, 31C45, 34A34)}

\begin{abstract}
Jacobi elliptic functions and complete elliptic integrals are generalized using three parameters. These generalized functions and integrals are closely related to ordinary differential equations involving $p$-Laplacian. In this paper, Wallis-type integral formulae are constructed for the generalized Jacobi elliptic functions. Moreover, for the generalized complete elliptic integrals, a Legendre-type relation is derived, which is equivalent to Elliott's identity for Gaussian hypergeometric series, along with its implications. In addition, nontrivial inequalities on binomial expansions of generalized Jacobi elliptic functions are given.
\end{abstract}

\keywords{$p$-Laplacian, Jacobi elliptic function, complete elliptic integral, Wallis integral formula, Legendre relation, Elliott's identity, hypergeometric series} 

\maketitle


\section{Introduction}

Jacobi elliptic functions and complete elliptic integrals are very useful tools in expressing exact solutions of differential equations. For example, the Jacobi elliptic function $y=\sn{(x,k)}$ with modulus $k \in [0,1)$ and 
the complete elliptic integral of the first kind $K(k)$ satisfy the differential equation and the boundary conditions:
\begin{equation}
\label{eq:ACE}
\begin{cases}
y''+y(1+k^2-2k^2y^2)=0, \\
y(0)=y(2K(k))=0.
\end{cases}
\end{equation}
Nonlinear differential equations of this kind frequently appear in models describing bistable phenomena, and their solutions are expected to be expressible in terms of $\sn{(x,k)}$ and $K(k)$.

In this paper, we consider generalizations of Jacobi elliptic functions 
and of complete elliptic integrals by adding some parameters. The generalized Jacobi elliptic functions (GJEFs) and the generalized complete elliptic integrals (GCEIs) satisfy nonlinear ordinary differential equations involving $p$-Laplacian. For instance, $y=\sn_{p,q,p^*}(x,k)$ and $K_{p,q,p^*}(k)$,
which are defined in Section \ref{sec:preparation}, satisfy
\begin{equation*}
\begin{cases}
(|y'|^{p-2}y')'+\dfrac{(p-1)q}{p}|y|^{q-2}y(1+k^q-2k^q|y|^q)=0,\\
y(0)=y(2K_{p,q,p^*}(k))=0,
\end{cases}
\end{equation*}
where $p, q$ are parameters greater than $1$.
When $p=q=2$, it coincides with \eqref{eq:ACE}.
In fact, the GJEFs we are dealing with involve three parameters and satisfy more general equations, for example, \eqref{eq:snode}--\eqref{eq:dnallencahn} below.

The GJEFs include 
the generalized trigonometric functions (GTFs), which were proposed by Lindqvist \cite{lindqvist1995} and Dr\'{a}bek and Man\'{a}sevich \cite{DM1999}
as the case $k=0$.
In the last decades, 
GTFs have been the subject of active research,
not only in terms of its mathematical properties,
but also for their physical applications.
For instance, GTFs have been used to study nonlinear
spring-mass systems in Lagrangian Mechanics \cite{WLE2012};
to describe human tooth vibration properties \cite{VC2022,CVC2020};
to describe nonlinear oscillations \cite{CVC2020b,GB2019};
to provide the generalized stiffness and mass coefficients for 
power-law Euler-Bernoulli beams \cite{SNW2020};
in quantum gravity \cite{SPC2016};
in quantum graphs \cite{AE2024};
to express some exact solutions of a nonlinear
Schr\"{o}dinger equation \cite{GPPT2025};
and very recently in information theory \cite{PZpreprint}.
GJEFs and GCEIs have also been studied mathematically since they were proposed by the third author \cite{Takeuchi2011}. See, for example, the survey \cite{YHWL2019} by Yin et al. It is expected that GJEFs and GCEIs will be applied to various sciences in the future, just as GTFs have been.

In the following while, two classical topics on Jacobi elliptic functions and complete elliptic integrals will be presented. Specifically, we will discuss the integral value of powers for Jacobi elliptic functions and the Legendre relation for complete elliptic integrals.

First, we discuss the integration of powers for Jacobi elliptic functions.
When calculating the value of norm for a Jacobi elliptic function, it is necessary to integrate the powers of that function. In particular, when the modulus is zero, the Jacobi elliptic function becomes a trigonometric
one, and the integral of powers for the function can be computed by using the famous Wallis integral formula:
\begin{align}
\label{eq:WIF}
\begin{split}
& \int_0^{\pi/2}\sin^{2n}x\,dx
=\int_0^{\pi/2}\cos^{2n}x\,dx
=\frac{(2n-1)!!}{(2n)!!}\frac{\pi}{2},\\
& \int_0^{\pi/2}\sin^{2n+1}x\,dx
=\int_0^{\pi/2}\cos^{2n+1}x\,dx
=\frac{(2n)!!}{(2n+1)!!}.
\end{split}
\end{align}
On the other hand, when the modulus is not zero, it can be computed by the recurrence formula:
$I_n=I_n(k):=\int_0^{K(k)}\sn^n{(x,k)}\,dx$ satisfies
\begin{equation}
\label{eq:Izenka}
(n+3)k^2I_{n+4}-(n+2)(1+k^2)I_{n+2}+(n+1)I_n=0
\quad (n=0,1,2,\ldots).
\end{equation}
This formula with the initial values of $I_n\ (n=0,1,2,3)$ are given 
in \cite[310.00--310.06]{Byrd-Friedman1971}.
However, 
no formula making the integral of powers for Jacobi elliptic functions explicit seems to be known.

Next, we discuss the Legendre relation.
It is well known that the complete elliptic integrals of the first kind $K(k)$ and of the second kind $E(k)$ satisfy the Legendre relation:
\begin{equation}
\label{eq:CLR}
E(k)K'(k)+K(k)E'(k)-K(k)K'(k)=\frac{\pi}{2} \quad
\mbox{for all $k \in (0,1)$},
\end{equation}
where $k':=\sqrt{1-k^2},\ K'(k):=K(k')$ and $E'(k):=E(k')$.
This nontrivial relation appears in all books on elliptic integrals and is also important in applications. For example, in conjunction with the Gauss identity for arithmetic-geometric mean, it is used to derive the Gauss-Legendre algorithm, a fast formula for computing approximations
of $\pi$. 
Legendre is said to have found his identity \eqref{eq:CLR} by chance, but it is now known that \eqref{eq:CLR} implies that the Wronskian of $K$ and $K'$ can be
represented as
\[W(K,K';k)=-\frac{\pi}{2k(1-k^2)}.\]
For this fact see, for example, the book \cite[Lemma 3.2.6, but the coefficient $x(1-x)$ of $y''$ is missing]{AAR1999} by Andrews et al. In fact, the identity corresponding to (1.4) has been obtained for GCEIs in 
\cite{Takeuchi2016b}
by the third author,
but the meaning of the identity remained unclear in that paper.

The purpose of this paper is to generalize Wallis' formula \eqref{eq:WIF} for integrals of Jacobi elliptic functions and Legendre's relation \eqref{eq:CLR}
satisfied by complete elliptic integrals to GJEFs and GCEIs. 
In addition, nontrivial inequalities on binomial expansions 
satisfied by GJEFs are given.
Although there have been numerous studies of GJEFs and GCEIs, 
most of them have two or fewer free parameters (see \cite{YHWL2019})
. There are few studies of those with as many as three free parameters, as far as we know. 
We only know that the third author \cite{Takeuchi2016b} generalizes the Legendre relation, and Yin, Lin and Qi \cite{YLQ} investigate monotonicity, convexity, and inequalities.

This paper is organized as follows. In Section \ref{sec:preparation}, we introduce some known special functions and
generalizations of trigonometric functions, Jacobi elliptic functions, and complete elliptic integrals, which are the subject of our discussion in the present paper. In Section \ref{sec:differential}, we show that each of these generalized functions satisfies certain nonlinear ordinary differential equations. In Section \ref{sec:integral}, we construct Wallis-type integral formulae for powers of GJEFs. As examples, we give some integrals of powers of classical Jacobi elliptic functions. In Section \ref{sec:legendre}, we generalize the Legendre relation satisfied by complete elliptic integrals to GCEIs by giving its implications. In Section \ref{sec:edmunds}, we give nontrivial inequalities on binomial expansions satisfied by GJEFs.

\section{Preliminaries}
\label{sec:preparation}

In this section, we define generalizations of trigonometric functions, Jacobi elliptic functions, and complete elliptic integrals. We also recall Gauss's hypergeometric series and Appell's hypergeometric series.

Let $p, q>1$ and $s^*:=s/(s-1)$ for $s>1$ so that $1/s+1/s^*=1$. We define 
\begin{equation*}
F_{p,q}(x):=\int_0^x \frac{dt}{(1-t^q)^{1/p}} \quad (x \in [0,1]),
\end{equation*}
and
\begin{equation*}
\pi_{p,q}:=2F_{p,q}(1)=
2\int_0^1 \frac{dt}{(1-t^q)^{1/p}}
=\frac{2}{q}B\left(\frac{1}{p^*},\frac{1}{q}\right),
\end{equation*}
where $B$ is the beta function
\[B(x,y):=\int_0^1 t^{x-1}(1-t)^{y-1}\,dt \quad (x,y>0).\]
Since $F_{p,q}$ is an increasing function 
from $[0,1]$ to $[0,\pi_{p,q}/2]$, 
the inverse function of $F_{p,q}$ from $[0,\pi_{p,q}/2]$ to $[0,1]$ 
is denoted by
\begin{equation*}
\sin_{p,q}{x}:=F_{p,q}^{-1}(x).
\end{equation*}
Moreover, we define 
\begin{equation*}
\cos_{p,q}{x}:=(1-\sin_{p,q}^qx)^{1/p}.
\end{equation*}
It is easy to check the properties: for $x \in (0,\pi_{p,q}/2)$,
\begin{align}
& \notag \cos_{p,q}^p{x}+\sin_{p,q}^q{x}=1,\\
&  \notag (\sin_{p,q}{x})'=\cos_{p,q}{x},\\
& \notag (\cos_{p,q}{x})'=-\frac{q}{p}\sin_{p,q}^{q-1}{x}\cos_{p,q}^{2-p}{x},\\
& \label{eq:ev} (\cos_{p,q}^{p-1}{x})'=-\frac{q}{p^*}\sin_{p,q}^{q-1}{x}.
\end{align}

For simplicity of notation, define 
$\pi_p:=\pi_{p,p}$, 
$\sin_p{x}:=\sin_{p,p}{x}$ and $\cos_p{x}:=\cos_{p,p}{x}$.
Readers can find more information about generalized trigonometric functions in the references \cite{DM1999,Kobayashi-Takeuchi,Lang2011,lindqvist1995,YHWL2019}.

Similarly, for $p,q,r>1$ and $k \in [0,1)$, we define
\begin{align*}
H_{p,q,r}(x,k):&=
\int_0^x \frac{dt}{(1-t^q)^{1/p}(1-k^qt^q)^{1-1/r}}\\
&=\int_0^{F_{p,q}(x)}\frac{d\theta}{(1-k^q\sin_{p,q}^q{\theta})^{1-1/r}} \quad (x \in [0,1]),
\end{align*}
and the complete $(p,q,r)$-elliptic integral of the first kind
\begin{align}
\label{eq:kpqr}
K_{p,q,r}(k):&=H_{p,q,r}(1,k)
=\int_0^1 \frac{dt}{(1-t^q)^{1/p}(1-k^qt^q)^{1-1/r}}\\
&=\int_0^{\pi_{p,q}/2}\frac{d\theta}{(1-k^q\sin_{p,q}^q{\theta})^{1-1/r}}. \notag
\end{align}
The generalization of the complete elliptic integral $K(k)$ to three parameters, $K_{p,q,r}(k)$, was first studied in the third author's paper \cite{Takeuchi2016b}.
Since $H_{p,q,r}(\cdot,k)$ is an increasing function 
from $[0,1]$ to $[0,K_{p,q,r}(k)]$, 
the inverse function of $H_{p,q,r}(\cdot,k)$ from $[0,K_{p,q,r}(k)]$ to $[0,1]$ 
is denoted by
\begin{equation*}
\sn_{p,q,r}x=\sn_{p,q,r}(x,k):=H_{p,q,r}^{-1}(x,k).
\end{equation*}
Moreover, we define
\begin{align*}
& \cn_{p,q,r}x=\cn_{p,q,r}(x,k):=(1-\sn_{p,q,r}^q(x,k))^{1/p},\\
& \dn_{p,q,r}x=\dn_{p,q,r}(x,k):=(1-k^q\sn_{p,q,r}^q(x,k))^{1-1/r}.
\end{align*}
The generalizations of Jacobi elliptic integrals to three parameters were first defined in the third author's paper \cite{Takeuchi2013}. 
We call $(p,q,r)$-\textit{elliptic sine}, \textit{-elliptic cosine}, and \textit{-elliptic delta} for 
$\sn_{p,q,r}, \cn_{p,q,r}$, and $\dn_{p,q,r}$, respectively.
It is easy to check the properties: for $x \in (0,K_{p,q,r}(k))$,
\begin{align}
\notag
& \sn_{p,q,r}(x,0)=\sin_{p,q}{x},\ \cn_{p,q,r}{(x,0)}=\cos_{p,q}x,\ \dn_{p,q,r}(x,0)=1,\\
\label{eq:cs}
& \cn_{p,q,r}^p{x}+\sn_{p,q,r}^q{x}=1,\\
\label{eq:ds}
& \dn_{p,q,r}^{r^*}{x}+k^q\sn_{p,q,r}^q{x}=1,\\
\label{eq:sn'}
& (\sn_{p,q,r}{x})'=\cn_{p,q,r}{x}\dn_{p,q,r}{x},\\
\notag
& (\cn_{p,q,r}{x})'=-\frac{q}{p}\sn_{p,q,r}^{q-1}{x}\cn_{p,q,r}^{2-p}{x}\dn_{p,q,r}{x},\\
\notag
& (\dn_{p,q,r}{x})'=-\frac{q}{r^*}k^q\sn_{p,q,r}^{q-1}{x}\cn_{p,q,r}{x}\dn_{p,q,r}^{2-r^*}{x},\\
\label{eq:cnp'}
&
 (\cn_{p,q,r}^{p-1}{x})'=-\frac{q}{p^*}\sn_{p,q,r}^{q-1}{x}\dn_{p,q,r}{x},\\
\label{eq:dnp'}
& (\dn_{p,q,r}^{r^*-1}{x})'=-\frac{q}{r}k^q\sn_{p,q,r}^{q-1}{x}\cn_{p,q,r}{x}.
\end{align}
For example, the formula \eqref{eq:sn'} is obtained by the differential formula for the inverse function as 
$(\sn_{p,q,r}x)'=1/H'_{p,q,r}(\sn_{p,q,r}x,k)=\cn_{p,q,r}x \dn_{p,q,r}x$.

In addition,  
the complete $(p,q,r)$-elliptic integral of the second kind is defined by
\begin{equation}
\label{eq:epqr}
E_{p,q,r}(k):
=\int_0^1 \frac{(1-k^qt^q)^{1/r}}{(1-t^q)^{1/p}}\,dt
=\int_0^{\pi_{p,q}/2}(1-k^q\sin_{p,q}^q{\theta})^{1/r}\, d\theta.
\end{equation}

For simplicity of notation, define 
$K_{p,q}(k):=K_{p,q,q}(k)$, $E_{p,q}(k):=E_{p,q,q}(k)$,
$K_{p}(k):=K_{p,p,p}(k)$, and $E_{p}(k):=E_{p,p,p}(k)$. 

\begin{rem}
We define the $(p,q,r)$-amplitude function $\am_{p,q,r}(\cdot,k):[0,K_{p,q,r}(k)]\to [0,\pi_{p,q}/2]$ by
\begin{gather*}
x=
\int_0^{\am_{p,q,r}{(x,k)}}
\frac{d\theta}{(1-k^q\sin_{p,q}^q{\theta})^{1-1/r}};
\end{gather*}
then
\begin{align*}
&\sn_{p,q,r}(x,k)=\sin_{p,q}(\am_{p,q,r}(x,k)).\\
&\cn_{p,q,r}(x,k)=\cos_{p,q}(\am_{p,q,r}(x,k)),\\
&\dn_{p,q,r}(x,k)=(1-k^q\sin_{p,q}^q(\am_{p,q,r}(x,k)))^{1/r^*}.
\end{align*}
\end{rem}

Finally, let us review some special functions.
The gamma function defined by
\[\Gamma(s):=\int_0^\infty x^{s-1}e^{-x}\,dx\]
is useful in calculating the value of the beta function:
\[B(x,y)=\frac{\Gamma(x)\Gamma(y)}{\Gamma(x+y)}.\]
Gauss's hypergeometric series is defined by
\[F(\alpha,\beta;\gamma;x):=\sum_{n=0}^\infty \frac{(\alpha)_n(\beta)_n}{(\gamma)_nn!}x^n
\quad (|x|<1)\]
and Appell's hypergeometric series (of the first kind) by
\[F_1(\alpha;\beta,\beta';\gamma;x,y)
:=\sum_{m,n=0}^\infty \frac{(\alpha)_{m+n}(\beta)_m(\beta')_n}{(\gamma)_{m+n}m!n!}x^my^n \quad (|x|,|y|<1).
\]
These are undefined if $\gamma$ equals a non-positive integer.
Here, $(\zeta)_n$ is the Pochhammer symbol
(or the rising factorial):
\[(\zeta)_n:=
\begin{cases}
1 & (n=0),\\
\zeta (\zeta +1)\cdots (\zeta +n-1) & (n=1,2,3,\ldots).
\end{cases}\]

\section{Differential equations for GJEFs}
\label{sec:differential}

In this section, we discuss ordinary differential equations satisfied by the GJEFs.

The function $\sn_{p,q,r}{x}$ is an increasing function on the interval $ [0, K_{p,q,r}(k)]$, as mentioned in Section \ref{sec:preparation}, 
and is extended to $\R$ as a $4K_{p,q,r}(k)$-periodic odd function by means of 
$\sn_{p,q,r} (-x)=-\sn_{p,q,r} x$ and $\sn_{p,q,r}(x+2K_{p,q,r}(k))=-\sn_{p,q,r}x$.
We will also call its extended function $\sn_{p,q,r}{x}$,
which is easily seen to be smooth in $\R$.
In the same manner, 
$\cn_{p,q,r}x$ and $\dn_{p,q,r}x$ are extended to $\R$
by $\cn_{p,q,r} (-x)=\cn_{p,q,r} x$, $\cn_{p,q,r}(x+2K_{p,q,r}(k))=-\cn_{p,q,r}x$ and 
$\dn_{p,q,r} (-x)=\dn_{p,q,r} x$, $\dn_{p,q,r}(x+2K_{p,q,r}(k))=\dn_{p,q,r}x$, respectively.
Then, $\cn_{p,q,r}x$ is a $4K_{p,q,r}(k)$-periodic even function
and $\dn_{p,q,r}x$ is a $2K_{p,q,r}(k)$-periodic even (positive) function.
For these extended functions, the properties given in the previous section can be easily extended by considering periodicity. 
For $\alpha>1$, let $\phi_\alpha (t):=|t|^{\alpha-2}t\ (t \in \R \setminus \{0\});\ =0\ (t=0)$.
Then, for example, \eqref{eq:cnp'} can be modified as
\[(\phi_p(\cn_{p,q,r}{x}))'
=-\frac{q}{p^*}\phi_q(\sn_{p,q,r}x)\dn_{p,q,r}{x}.
\]

\begin{thm}
\label{prop:snode}
$(\mathrm{i})$ For $x \in \R$, $y=\sn_{p,q,r}{x}$ satisfies 
\begin{equation}
\label{eq:snode}
(\phi_p(y'))'+\frac{q}{p^*}\phi_q(y)(1-k^q|y|^q)^{p/r^*-1}\left(1+\frac{p}{r^*}k^q-\left(1+\frac{p}{r^*}\right)k^q|y|^q\right)=0.
\end{equation}
In particular, $y=\sn_{p,q,p^*}x$ satisfies
the Allen-Cahn-type equation$:$
\begin{equation}
\label{eq:allencahn}
(\phi_p(y'))'+\frac{q}{p^*}\phi_q(y)(1+k^q-2k^q|y|^q)=0.
\end{equation}

$(\mathrm{ii})$ For $x \in \R$, $y=\phi_p(\cn_{p,q,r}{x})$ satisfies 
\begin{multline}
\label{eq:cnode}
(\phi_{q^*}(y'))'+\left(\frac{q}{p^*}\right)^{q^*-1}\phi_{p^*}(y)\\
\times (1-k^q+k^q|y|^{p^*})^{q^*/r^*-1}\left(1-\left(1+\frac{q^*}{r^*}\right)k^q+\left(1+\frac{q^*}{r^*}\right)k^q|y|^{p^*}\right)=0.
\end{multline}
In particular, $y=\phi_p(\cn_{p,q,q}{x})$ satisfies
\begin{equation}
\label{eq:cnallencahn}
(\phi_{q^*}(y'))'+\left(\frac{q}{p^*}\right)^{q^*-1}\phi_{p^*}(y)\left(1-2k^q+2k^q|y|^{p^*}\right)=0.
\end{equation}

$(\mathrm{iii})$ For $x \in \R$, $y=\phi_{r^*}(\dn_{p,q,r}{x})
=\dn_{p,q,r}^{r^*-1}{x}$ satisfies 
\begin{multline}
\label{eq:dnode}
(\phi_{q^*}(y'))'+\left(\frac{q}{r}\right)^{q^*-1}k^{q^*}y^{r-1}\\
\times \left(1-\frac{1}{k^q}+\frac{1}{k^q}y^r\right)^{q^*/p-1}\left(1-\left(1+\frac{q^*}{p}\right)\frac{1}{k^q}+\left(1+\frac{q^*}{p}\right)\frac{1}{k^q}y^r\right)=0.
\end{multline}
In particular, $y=\phi_{r^*}(\dn_{p,p^*,r}x)=\dn_{p,p^*,r}^{r^*-1}x$ satisfies the scalar-field-type equation$:$
\begin{equation}
\label{eq:dnallencahn}
(\phi_p(y'))'+\left(\frac{p^*}{r}\right)^{p-1}k^py^{r-1}
\left(1-\frac{2}{k^{p^*}}+\frac{2}{k^{p^*}}y^r\right)=0.
\end{equation}
\end{thm}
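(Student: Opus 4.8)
The plan is to prove all three identities, together with their ``in particular'' specializations, by direct differentiation starting from the first-order formulas \eqref{eq:sn'}--\eqref{eq:dnp'} and the algebraic relations \eqref{eq:cs}--\eqref{eq:ds}, all of which extend from $(0,K_{p,q,r}(k))$ to $\R$ once the sign-sensitive powers are written with $\phi_\alpha$ and $|\cdot|$ (so that $|\cn_{p,q,r}x|^p+|\sn_{p,q,r}x|^q=1$ and $\dn_{p,q,r}^{r^*}x+k^q|\sn_{p,q,r}x|^q=1$ hold for every $x$). The computation rests on the conjugate-exponent identities $(s-1)s^*=s$, $\ s^*-1=1/(s-1)=s^*/s$, $\ (s-1)(s^*-1)=1$ for $s\in\{p,q,r\}$, on $(q-1)(q^*-2)=2-q$, and on the multiplicativity $\phi_\alpha(ab)=\phi_\alpha(a)\phi_\alpha(b)$; these are exactly what turn the fractional powers appearing after differentiation into the tidy right-hand sides of \eqref{eq:snode}, \eqref{eq:cnode}, \eqref{eq:dnode}.

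For (i), since $\dn_{p,q,r}x>0$ we have $\phi_p(y')=\phi_p(\cn_{p,q,r}x)\,\dn_{p,q,r}^{p-1}x$, so the product rule together with the extended form of \eqref{eq:cnp'} and $(\dn_{p,q,r}x)'=-\tfrac{q}{r^*}k^q\phi_q(\sn_{p,q,r}x)\cn_{p,q,r}x\,\dn_{p,q,r}^{2-r^*}x$ presents $(\phi_p(y'))'$ as a sum of two terms. Rewriting $\phi_p(\cn_{p,q,r}x)\cn_{p,q,r}x=|\cn_{p,q,r}x|^p=1-|y|^q$ and $\dn_{p,q,r}^{r^*}x=1-k^q|y|^q$, and using $(p-1)p^*=p$, I factor out $-\tfrac{q}{p^*}\phi_q(y)(1-k^q|y|^q)^{p/r^*-1}$; the bracket that remains equals exactly $1+\tfrac{p}{r^*}k^q-(1+\tfrac{p}{r^*})k^q|y|^q$, which is \eqref{eq:snode}. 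Specializing $r=p^*$ forces $r^*=p$, so $p/r^*-1=0$ and $1+p/r^*=2$, yielding \eqref{eq:allencahn}.

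Parts (ii) and (iii) follow the same two-step pattern and I would run them in parallel. For (ii), put $y=\phi_p(\cn_{p,q,r}x)$; then $|y|^{p^*}=|\cn_{p,q,r}x|^p=1-|\sn_{p,q,r}x|^q$ and $\sgn y=\sgn\cn_{p,q,r}x$, so (using $p^*/p=p^*-1$) $\cn_{p,q,r}x=\phi_{p^*}(y)$ and $|\sn_{p,q,r}x|^q=1-|y|^{p^*}$. From \eqref{eq:cnp'}, $y'=-\tfrac{q}{p^*}\phi_q(\sn_{p,q,r}x)\dn_{p,q,r}x$, and applying $\phi_{q^*}$ and $(q-1)(q^*-2)=2-q$ (so $|\sn_{p,q,r}x|^{(q-1)(q^*-2)}\phi_q(\sn_{p,q,r}x)=\sn_{p,q,r}x$) one gets the compact intermediate form $\phi_{q^*}(y')=-(\tfrac{q}{p^*})^{q^*-1}\sn_{p,q,r}x\,\dn_{p,q,r}^{q^*-1}x$. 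Differentiating this once more with \eqref{eq:sn'} and the formula for $(\dn_{p,q,r}x)'$, then using $(q^*-1)q=q^*$, $\sn_{p,q,r}x\,\phi_q(\sn_{p,q,r}x)=1-|y|^{p^*}$, $\cn_{p,q,r}x=\phi_{p^*}(y)$, and $\dn_{p,q,r}^{r^*}x=1-k^q+k^q|y|^{p^*}$, produces \eqref{eq:cnode}; the choice $r=q$ (hence $r^*=q^*$) gives \eqref{eq:cnallencahn}. For (iii), set $y=\dn_{p,q,r}^{r^*-1}x>0$, so $\dn_{p,q,r}x=y^{r-1}$ and $y^r=\dn_{p,q,r}^{r^*}x=1-k^q|\sn_{p,q,r}x|^q$, whence $|\sn_{p,q,r}x|^q=(1-y^r)/k^q$ and $|\cn_{p,q,r}x|^p=1-(1-y^r)/k^q$; then \eqref{eq:dnp'} and $\phi_{q^*}$ (with $q(q^*-1)=q^*$) give $\phi_{q^*}(y')=-(\tfrac{q}{r})^{q^*-1}k^{q^*}\sn_{p,q,r}x\,\phi_{q^*}(\cn_{p,q,r}x)$, and a final differentiation using $(\cn_{p,q,r}x)'=-\tfrac{q}{p}\phi_q(\sn_{p,q,r}x)|\cn_{p,q,r}x|^{2-p}\dn_{p,q,r}x$, together with $\cn_{p,q,r}x\,\phi_{q^*}(\cn_{p,q,r}x)=|\cn_{p,q,r}x|^{q^*}$ and $\sn_{p,q,r}x\,\phi_q(\sn_{p,q,r}x)=|\sn_{p,q,r}x|^q$, produces \eqref{eq:dnode}; putting $q=p^*$ makes $q^*=p$ and $k^q=k^{p^*}$, which is \eqref{eq:dnallencahn}.

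The one genuinely delicate step is the second differentiation in (ii) and (iii): the product rule there yields two summands carrying different powers of $\dn_{p,q,r}x$ (respectively $|\cn_{p,q,r}x|$), and one must pull out the correct common factor and verify that the remaining bracket collapses to an affine function of $|y|^{p^*}$ (respectively $y^r$) with precisely the stated coefficients — this is purely a matter of executing the conjugate-exponent arithmetic without slips. A secondary, routine point is the passage to all of $\R$: every sign-sensitive quantity must be carried as $\phi_\alpha(\cdot)$ or $|\cdot|$, and since $\sn_{p,q,r}$ is odd, $\cn_{p,q,r}$ even, and $\dn_{p,q,r}$ even and positive, each identity is invariant under $x\mapsto-x$ and $x\mapsto x+2K_{p,q,r}(k)$, so it suffices to have checked it on the fundamental interval $(0,K_{p,q,r}(k))$.
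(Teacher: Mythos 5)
Your proposal is correct and follows essentially the same route as the paper: differentiate $\phi_p(\cn_{p,q,r}x)\,\dn_{p,q,r}^{p-1}x$, $\sn_{p,q,r}x\,\dn_{p,q,r}^{q^*-1}x$, and $\sn_{p,q,r}x\,\phi_{q^*}(\cn_{p,q,r}x)$ (up to constants) via \eqref{eq:cs}--\eqref{eq:dnp'}, then substitute $|\sn_{p,q,r}x|^q$, $|\cn_{p,q,r}x|^p$, $\dn_{p,q,r}^{r^*}x$ in terms of $y$ and specialize $r=p^*$, $r=q$, $q=p^*$. The intermediate forms and conjugate-exponent identities you cite are exactly those used in the paper's computation, so the proof goes through as outlined.
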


\begin{proof}
(i) Let $y=\sn_{p,q,r}{x}$. 
As $\phi_p(\cn_{p,q,r}x), \dn_{p,q,r}^{p-1}x \in C^1(\R)$,
we have $\phi_p(y') \in C^1(\R)$ and 
\begin{align*}
(\phi_p(y'))'
&=-\frac{q}{p^*}\phi_q(y)\dn_{p,q,r}^{p-r^*}x
\left(\dn_{p,q,r}^{r^*}x+\frac{p}{r^*}k^q|\cn_{p,q,r}x|^p\right)\\
&=-\frac{q}{p^*}\phi_q(y)(1-k^q|y|^q)^{p/r^*-1}
\left(1+\frac{p}{r^*}k^q-\left(1+\frac{p}{r^*}\right)k^q|y|^q\right),
\end{align*} 
i.e., $y$ satisfies \eqref{eq:snode}. When $r=p^*$, 
\eqref{eq:snode} becomes \eqref{eq:allencahn}.

(ii) Let $y=\phi_p(\cn_{p,q,r}{x})$. As $\sn_{p,q,r}x, \dn_{p,q,r}^{q^*-1}x \in C^1(\R)$,
we have $\phi_{q^*}(y') \in C^1(\R)$ and 
\begin{align*}
(\phi_{q^*}(y'))'
&=-\left(\frac{q}{p^*}\right)^{q^*-1}\cn_{p,q,r}x \dn_{p,q,r}^{q^*-r^*}x
\left(\dn_{p,q,r}^{r^*}x-\frac{q^*}{r^*}k^q|\sn_{p,q,r}x|^q\right)\\
&=-\left(\frac{q}{p^*}\right)^{q^*-1}\phi_{p^*}(y) (1-k^q+k^q|y|^{p^*})^{q^*/r^*-1}\\
&\qquad \qquad \times \left(1-\left(1+\frac{q^*}{r^*}\right)k^q+\left(1+\frac{q^*}{r^*}\right)k^q|y|^{p^*}\right),
\end{align*} 
i.e., $y$ satisfies \eqref{eq:cnode}. When $r=q$, 
\eqref{eq:cnode} becomes \eqref{eq:cnallencahn}.

(iii) Let $y=\phi_{r^*}(\dn_{p,q,r}{x})
=\dn_{p,q,r}^{r^*-1}{x}$. As $\sn_{p,q,r}x, \phi_{q^*}(\cn_{p,q,r}x) \in C^1(\R)$,
we have $\phi_{q^*}(y') \in C^1(\R)$ and 
\begin{align*}
(\phi_{q^*}(y'))'
&=-\left(\frac{q}{r}\right)^{q^*-1}k^{q^*}|\cn_{p,q,r}x|^{q^*-p} \dn_{p,q,r}x
\left(|\cn_{p,q,r}x|^p-\frac{q^*}{p}|\sn_{p,q,r}x|^q\right)\\
&=-\left(\frac{q}{r}\right)^{q^*-1}k^{q^*}y^{r-1} \left(1-\frac{1}{k^q}-\frac{1}{k^q}y^r\right)^{q^*/p-1}\\
&\qquad \qquad \times \left(1-\left(1+\frac{q^*}{p}\right)\frac{1}{k^q}+\left(1+\frac{q^*}{p}\right)\frac{1}{k^q}y^r\right),
\end{align*} 
i.e., $y$ satisfies \eqref{eq:dnode}. When $q=p^*$, 
\eqref{eq:dnode} becomes \eqref{eq:dnallencahn}.
\end{proof}

We conclude this section by giving some remarks on \eqref{eq:allencahn}. 
They are also given in the third author's paper \cite{Takeuchi2011}.

When $k=0$, \eqref{eq:allencahn} states that 
$\sn_{p,q,r}(x,0)=\sin_{p,q}x$ satisfies 
\begin{gather}
\label{eq:p/2ev}
(|y'|^{p-2}y')'+\frac{q}{p^*}|y|^{q-2}y=0.
\end{gather}
This fact is well known in the eigenvalue problem of the $p$-Laplacian, 
due to 
Dr\'{a}bek and Man\'{a}sevich \cite{DM1999}.

Let us assume that $p>1$, and let $k \to 1-0$
in \eqref{eq:allencahn} with $p$ replaced by $2p$. Then, we see that $y=\lim_{k \to 1-0}\sn_{2p,q,(2p)^*}(x,k)=\sin_{p,q}{x}$ satisfies
\begin{equation}
\label{eq:pallencahn}
(|y'|^{2p-2}y')'+\frac{2q}{(2p)^*}|y|^{q-2}y(1-|y|^q)=0.
\end{equation}
On the other hand, from \eqref{eq:ev}, $y=\sin_{p,q}x$ satisfies
\eqref{eq:p/2ev}.
This shows that an eigenfunction of $p$-Laplacian can be a solution to an Allen-Cahn-type equation with $2p$-Laplacian.
For any interval where $y' \neq 0$, this fact can be formally explained as follows. Since $y=\sin_{p,q}x$ is a $C^2$-function in this interval, 
$(|y'|^{2p-2}y')'=(2p-1)|y'|^{2p-2}y''$. 
Furthermore, $1-|y|^q=|y'|^{p}$. 
Therefore, dividing both sides of \eqref{eq:pallencahn} by $|y'|^{p}$ yields
\eqref{eq:p/2ev}.

\section{Integral formulae}
\label{sec:integral}

In this section, we give several integral formulae for GJEFs, including Wallis-type formulae.
As an application, the integral formulae for classical Jacobi elliptic functions are given.

\begin{thm}
\label{thm:intformula}
Let $p, q, r>1$ and $k \in [0,1)$. 
Then, for any $a>-1,\ b, c \in \R$ and $x \in [0,K_{p,q,r}(k))$,
\begin{multline}
\label{eq:primitive}
\int_0^x \sn_{p,q,r}^a{t}\cn_{p,q,r}^b{t}\dn_{p,q,r}^c{t}\,dt\\
=\frac{1}{a+1}\sn_{p,q,r}^{a+1}{x}
F_1\left(\frac{a+1}{q};\frac{1-b}{p},\frac{1-c}{r^*};\frac{a+1}{q}+1;
\sn_{p,q,r}^q{x},k^q\sn_{p,q,r}^q{x}\right).
\end{multline}
In particular, for any $a>-1,\ b>1-p,\ c \in \R$ and $k \in [0,1)$,
\begin{multline}
\label{eq:definite}
\int_0^{K_{p,q,r}(k)} 
\sn_{p,q,r}^a{t}\cn_{p,q,r}^b{t}\dn_{p,q,r}^c{t}\,dt\\
=\frac{1}{q}B\left(\frac{a+1}{q},\frac{b-1}{p}+1\right)
F\left(\frac{a+1}{q},\frac{1-c}{r^*};\frac{a+1}{q}+\frac{b-1}{p}+1;
k^q\right),
\end{multline}
where $F_1$ and $F$ are defined in Section \ref{sec:preparation}.
\end{thm}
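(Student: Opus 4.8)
The plan is to reduce both formulae, via the substitution $u=\sn_{p,q,r}(t,k)$, to the classical Eulerian integral representations of Appell's $F_1$ and of Gauss's $F$. Writing $s:=\sn_{p,q,r}(x,k)$ and using \eqref{eq:sn'} to get $du=\cn_{p,q,r}t\,\dn_{p,q,r}t\,dt$, together with \eqref{eq:cs} and \eqref{eq:ds}, which give $\cn_{p,q,r}t=(1-u^q)^{1/p}$ and $\dn_{p,q,r}t=(1-k^qu^q)^{1/r^*}$, the left-hand side of \eqref{eq:primitive} becomes
\[
\int_0^x \sn_{p,q,r}^a t\,\cn_{p,q,r}^b t\,\dn_{p,q,r}^c t\,dt=\int_0^s u^a(1-u^q)^{(b-1)/p}(1-k^qu^q)^{(c-1)/r^*}\,du.
\]
On $[0,x]\subset[0,K_{p,q,r}(k))$ the functions $\cn_{p,q,r}t$ and $\dn_{p,q,r}t$ are positive, so $\cn^b,\dn^c$ are bounded there for all real $b,c$ and the sole convergence requirement is $a>-1$ (integrability of $u^a$ at $0$); when in addition $s\to1$ one also needs $(b-1)/p>-1$, i.e.\ $b>1-p$. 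This is where the stated hypotheses originate.

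For \eqref{eq:primitive} I would then substitute $u=sv^{1/q}$ in the reduced integral, obtaining
\[
\frac{s^{a+1}}{q}\int_0^1 v^{(a+1)/q-1}(1-s^qv)^{(b-1)/p}(1-k^qs^qv)^{(c-1)/r^*}\,dv,
\]
which is exactly the integral representation of $F_1(\alpha;\beta,\beta';\gamma;x,y)$ with $\alpha=(a+1)/q$, $\gamma=\alpha+1$, $\beta=(1-b)/p$, $\beta'=(1-c)/r^*$ and $(x,y)=(s^q,k^qs^q)$; here $\gamma-\alpha=1>0$, $\alpha>0$ by $a>-1$, and $s^q,k^qs^q\in[0,1)$ since $s\in[0,1)$. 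Because the $\Gamma$-prefactor collapses to $\Gamma(\alpha+1)/(\Gamma(\alpha)\Gamma(1))=\alpha=(a+1)/q$, the $v$-integral equals $\frac{q}{a+1}F_1(\cdots)$, and multiplying back by $s^{a+1}/q$ gives \eqref{eq:primitive}.

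For \eqref{eq:definite} it is cleanest not to pass to the limit $s\to1$ inside $F_1$ but to set $x=K_{p,q,r}(k)$, i.e.\ $s=1$, in the reduced integral of the first paragraph and substitute $w=u^q$:
\[
\int_0^{K_{p,q,r}(k)}\sn_{p,q,r}^a t\,\cn_{p,q,r}^b t\,\dn_{p,q,r}^c t\,dt=\frac1q\int_0^1 w^{(a+1)/q-1}(1-w)^{(b-1)/p}(1-k^qw)^{(c-1)/r^*}\,dw.
\]
This is Euler's integral for $F(\alpha,\beta;\gamma;z)$ with $\beta=(a+1)/q$, $\gamma-\beta=(b-1)/p+1$, $\alpha=(1-c)/r^*$ and $z=k^q$; the hypotheses $a>-1$, $b>1-p$ yield $\gamma>\beta>0$, and $k^q\in[0,1)$. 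Rewriting $\Gamma(\gamma)/(\Gamma(\beta)\Gamma(\gamma-\beta))$ as $1/B\big(\tfrac{a+1}{q},\tfrac{b-1}{p}+1\big)$ and using the symmetry $F(\alpha,\beta;\gamma;z)=F(\beta,\alpha;\gamma;z)$ produces \eqref{eq:definite}. (One could instead deduce \eqref{eq:definite} from \eqref{eq:primitive} using the classical reduction $F_1(\alpha;\beta,\beta';\gamma;1,y)=\frac{\Gamma(\gamma)\Gamma(\gamma-\alpha-\beta)}{\Gamma(\gamma-\alpha)\Gamma(\gamma-\beta)}F(\alpha,\beta';\gamma-\beta;y)$, but the direct substitution avoids invoking it.)

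I do not expect a genuine obstacle here: the argument is a change of variables followed by recognition of two standard Eulerian integrals, so the only real work is bookkeeping of the exponents and $\Gamma$-prefactors and, more importantly, verifying the convergence and domain conditions — which is precisely what singles out $a>-1$ in general, the extra condition $b>1-p$ for the definite integral, and the fact that the arguments fed to $F_1$ and to $F$ lie in $[0,1)$.
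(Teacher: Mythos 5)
Your argument is correct, and for \eqref{eq:primitive} it is essentially the paper's: the paper substitutes $\sn_{p,q,r}^q t=s$ and quotes Erd\'elyi's formula $\int_0^x s^{\alpha-1}(1-s)^\beta(1-ys)^\gamma\,ds=\frac{x^\alpha}{\alpha}F_1(\alpha;-\beta,-\gamma;\alpha+1;x,yx)$, whereas you rederive that same identity by the extra rescaling $u=sv^{1/q}$ and Picard's integral representation of $F_1$ with $\gamma=\alpha+1$; this is the same computation with the cited lemma proved rather than quoted. For \eqref{eq:definite} your route genuinely differs in which hypergeometric fact carries the load: the paper lets $x\to K_{p,q,r}(k)-0$ in \eqref{eq:primitive} and then applies the reduction $F_1(\alpha;\beta,\beta';\gamma;1,y)=\frac{\Gamma(\gamma)\Gamma(\gamma-\alpha-\beta)}{\Gamma(\gamma-\alpha)\Gamma(\gamma-\beta)}F(\alpha,\beta';\gamma-\beta;y)$, valid under $\gamma>\alpha+\beta$ (which is exactly $b>1-p$), while you evaluate the definite integral directly, via $w=u^q$, as Euler's integral for Gauss's $F$ and then use the symmetry $F(\alpha,\beta;\gamma;z)=F(\beta,\alpha;\gamma;z)$. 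The two proofs need the hypotheses $a>-1$ and $b>1-p$ at the same points; your version avoids invoking the less elementary $F_1$-at-unit-argument reduction (and in fact makes the limit passage unnecessary), while the paper's version has the structural advantage of obtaining \eqref{eq:definite} as a formal consequence of \eqref{eq:primitive}. Your tracking of where positivity of $\cn_{p,q,r}$, $\dn_{p,q,r}$ on $[0,K_{p,q,r}(k))$ and the conditions $\alpha=(a+1)/q>0$, $k^q\in[0,1)$ enter is accurate, so there is no gap.
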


\begin{proof}
Assume $a>-1,\ b, c \in \R$. Letting $\sn_{p,q,r}^q{t}=s$, we have
\begin{multline}
\label{eq:primitive2}
\int_0^x \sn_{p,q,r}^a{t}\cn_{p,q,r}^b{t}\dn_{p,q,r}^c{t}\,dt\\
=\frac{1}{q}\int_0^{\sn_{p,q,r}^q{x}}
s^{(a+1)/q-1}(1-s)^{(b-1)/p}(1-k^qs)^{(c-1)/r^*}\,ds.
\end{multline}
Here, it is known that for $\alpha>0,\ \beta, \gamma \in \R$ and $|x|, |y|<1$,
\begin{equation*}
\int_0^x s^{\alpha-1}(1-s)^\beta (1-ys)^{\gamma}\,ds
=\frac{x^\alpha}{\alpha}F_1(\alpha;-\beta,-\gamma;\alpha+1;x,yx)
\end{equation*}
(see \cite[5.8.2 (5)]{Erdelyi1953}). Hence, the right-hand side of  \eqref{eq:primitive2} is
\begin{equation*}
\frac{1}{a+1}\sn_{p,q,r}^{a+1}x
F_1\left(\frac{a+1}{q};\frac{1-b}{p},\frac{1-c}{r^*};
\frac{a+1}{q}+1;\sn_{p,q,r}^{q}x,k^q\sn_{p,q,r}^qx\right),
\end{equation*}
which implies \eqref{eq:primitive}.

Next, assume $a>-1,\ b>1-p,\ c \in \R$.
Letting $x \to K_{p,q,r}(k)-0$ in \eqref{eq:primitive}, we obtain
\begin{multline}
\label{eq:definite2}
\int_0^{K_{p,q,r}(k)} \sn_{p,q,r}^a{t}\cn_{p,q,r}^b{t}\dn_{p,q,r}^c{t}\,dt\\
=\frac{1}{a+1}
F_1\left(\frac{a+1}{q};\frac{1-b}{p},\frac{1-c}{r^*};\frac{a+1}{q}+1;
1,k^q\right).
\end{multline}
Here, it is also known that if $0<\alpha<\gamma,\ \gamma>\alpha+\beta$ and $|y|<1$, then
\[F_1(\alpha;\beta,\beta';\gamma;1,y)
=\frac{\Gamma(\gamma)\Gamma(\gamma-\alpha-\beta)}{\Gamma(\gamma-\alpha)\Gamma(\gamma-\beta)}F(\alpha,\beta';\gamma-\beta;y)\]
(see \cite[5.10 (10)]{Erdelyi1953}). Hence, the right-hand side of \eqref{eq:definite2} is 
\begin{equation*}
\frac{1}{q}B\left(\frac{a+1}{q},\frac{b-1}{p}+1\right)
F\left(\frac{a+1}{q},\frac{1-c}{r^*};\frac{a+1}{q}+\frac{b-1}{p}+1;
k^q\right),
\end{equation*}
which implies \eqref{eq:definite}.
\end{proof}

If $k=0$ in Theorem \ref{thm:intformula}, 
we obtain the integral formulae for GTFs.

\begin{cor}[{\cite[Theorem 3.1]{Kobayashi-Takeuchi}}]
\label{cor:sinacosb}
Let $p, q>1$. 
Then, for any $a>-1,\ b \in \R$ and $x \in [0,\pi_{p,q}/2)$,
\begin{equation*}
\int_0^x \sin_{p,q}^a{t}\cos_{p,q}^b{t}\,dt
=\frac{1}{a+1}\sin_{p,q}^{a+1}{x}
F\left(\frac{a+1}{q},\frac{1-b}{p};\frac{a+1}{q}+1;
\sin_{p,q}^q{x}\right).
\end{equation*}
In particular, for $a>-1,\ b>1-p$,
\begin{equation*}
\int_0^{\pi_{p,q}/2} 
\sin_{p,q}^a{t}\cos_{p,q}^b{t}\,dt
=\frac{1}{q}B\left(\frac{a+1}{q},\frac{b-1}{p}+1\right).
\end{equation*}
\end{cor}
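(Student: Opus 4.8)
The plan is to deduce Corollary~\ref{cor:sinacosb} as the special case $k=0$ (equivalently $r$ arbitrary, since $r$ disappears) of Theorem~\ref{thm:intformula}. Setting $k=0$ in \eqref{eq:primitive}, the Appell series $F_1\bigl(\tfrac{a+1}{q};\tfrac{1-b}{p},\tfrac{1-c}{r^*};\tfrac{a+1}{q}+1;\sn_{p,q,r}^q x,0\bigr)$ collapses: every term with $n\ge 1$ carries a factor $y^n=0$, so only the $n=0$ terms survive and the double sum reduces to $\sum_{m\ge 0}\frac{(\alpha)_m(\beta)_m}{(\gamma)_m m!}x^m=F(\alpha,\beta;\gamma;x)$ with $\alpha=\tfrac{a+1}{q}$, $\beta=\tfrac{1-b}{p}$, $\gamma=\tfrac{a+1}{q}+1$. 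Simultaneously, from the definitions in Section~\ref{sec:preparation} one has $\sn_{p,q,r}(x,0)=\sin_{p,q}x$, $\cn_{p,q,r}(x,0)=\cos_{p,q}x$, and $\dn_{p,q,r}(x,0)=1$, so the integrand $\sn_{p,q,r}^a t\,\cn_{p,q,r}^b t\,\dn_{p,q,r}^c t$ becomes $\sin_{p,q}^a t\,\cos_{p,q}^b t$ irrespective of $c$; the exponent $c$ and the parameter $r$ thus drop out entirely. This yields the first displayed formula of the corollary on $[0,\pi_{p,q}/2)$.

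For the second (definite) formula, I would similarly specialize \eqref{eq:definite} at $k=0$: the hypergeometric factor $F\bigl(\tfrac{a+1}{q},\tfrac{1-c}{r^*};\tfrac{a+1}{q}+\tfrac{b-1}{p}+1;k^q\bigr)$ equals $F(\cdots;0)=1$ since only the $n=0$ term of the defining series survives, leaving just the constant $\tfrac{1}{q}B\bigl(\tfrac{a+1}{q},\tfrac{b-1}{p}+1\bigr)$. Alternatively — and perhaps more cleanly for a self-contained argument — one can let $x\to\pi_{p,q}/2-0$ directly in the first formula of the corollary: then $\sin_{p,q}^q x\to 1$, and by Gauss's evaluation $F(\alpha,\beta;\gamma;1)=\Gamma(\gamma)\Gamma(\gamma-\alpha-\beta)/(\Gamma(\gamma-\alpha)\Gamma(\gamma-\beta))$, valid here because $\gamma-\alpha-\beta=1-\tfrac{1-b}{p}=\tfrac{b-1}{p}+1>0$ exactly under the hypothesis $b>1-p$, one gets $\tfrac{1}{a+1}\cdot\tfrac{\Gamma(\alpha+1)\Gamma(\gamma-\alpha-\beta)}{\Gamma(1)\Gamma(\gamma-\beta)}$, which simplifies using $\Gamma(\alpha+1)=\alpha\Gamma(\alpha)$ and $\gamma-\beta=\alpha+(\tfrac{b-1}{p}+1)$ to $\tfrac{1}{q}B\bigl(\tfrac{a+1}{q},\tfrac{b-1}{p}+1\bigr)$.

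There is essentially no obstacle here: the corollary is a direct specialization, and the only point requiring a word of care is the convergence/validity of the boundary limit, i.e.\ checking that the parameter restriction $b>1-p$ is precisely what guarantees $\gamma>\alpha+\beta$ so that Gauss's formula applies and the integral converges at the endpoint (this matches the hypothesis $b>1-p$ already needed in \eqref{eq:definite}). I would also remark that, as the statement attributes the result to \cite[Theorem 3.1]{Kobayashi-Takeuchi}, the proof can simply be recorded as ``this is the case $k=0$ of Theorem~\ref{thm:intformula}'' with the one-line reduction of $F_1$ to $F$ indicated. If a fully independent proof is wanted, one repeats the substitution $\sin_{p,q}^q t=s$ as in \eqref{eq:primitive2} with $k=0$, obtaining $\tfrac{1}{q}\int_0^{\sin_{p,q}^q x}s^{(a+1)/q-1}(1-s)^{(b-1)/p}\,ds$, and invokes the Euler integral representation of $F$ together with the standard incomplete-beta expansion, exactly paralleling the proof of Theorem~\ref{thm:intformula} but with the single series in place of the Appell series.
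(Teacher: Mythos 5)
Your proposal is correct and is essentially the paper's own proof: the authors also obtain both formulae by setting $k=0$ in \eqref{eq:primitive} and \eqref{eq:definite} of Theorem \ref{thm:intformula}, using the reduction $F_1(\alpha;\beta,\beta';\gamma;x,0)=F(\alpha,\beta;\gamma;x)$ and the identifications $\sn_{p,q,r}(x,0)=\sin_{p,q}x$, $\cn_{p,q,r}(x,0)=\cos_{p,q}x$, $\dn_{p,q,r}(x,0)=1$. Your alternative derivation of the definite integral via the limit $x\to\pi_{p,q}/2$ and Gauss's summation formula is a valid extra route, with the correct observation that $b>1-p$ is exactly the condition $\gamma>\alpha+\beta$.
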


\begin{proof}
Since $F_1(\alpha;\beta,\beta';\gamma;x,0)
=F(\alpha,\beta;\gamma;x)$, 
letting $k=0$ in \eqref{eq:primitive} and \eqref{eq:definite}
yields the assertions.
\end{proof}

The next theorem has already been proved in \cite{Takeuchi2016b}, but the proof is easy enough to state here. Note, however, that the definition of $K_{p,q,r}(k)$ in \cite{Takeuchi2016b} is slightly different from the definition in this present paper.

\begin{thm}[{\cite[p.41]{Takeuchi2016b}}]
\label{thm:choukika}
Let $p, q, r>1$ and $k \in [0,1)$. Then, 
\begin{align*}
& K_{p,q,r}(k)=\frac{\pi_{p,q}}{2}
F\left(\frac{1}{q},\frac{1}{r^*};\frac{1}{p^*}+\frac{1}{q};k^q\right),\\
& E_{p,q,r}(k)=\frac{\pi_{p,q}}{2}
F\left(\frac{1}{q},-\frac{1}{r};\frac{1}{p^*}+\frac{1}{q};k^q\right).
\end{align*}
\end{thm}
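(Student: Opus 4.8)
The plan is to obtain both formulae as special cases of the definite-integral formula \eqref{eq:definite} in Theorem \ref{thm:intformula}, using the integral representations \eqref{eq:kpqr} and \eqref{eq:epqr}. First I would rewrite $K_{p,q,r}(k)$ by the substitution already built into its definition: from \eqref{eq:kpqr},
\[
K_{p,q,r}(k)=\int_0^1 \frac{dt}{(1-t^q)^{1/p}(1-k^qt^q)^{1-1/r}}
=\int_0^1 (1-t^q)^{-1/p}(1-k^qt^q)^{(1/r)-1}\,dt.
\]
This is exactly the left-hand side of \eqref{eq:definite2} once one identifies the integrand with $\sn_{p,q,r}^a t\,\cn_{p,q,r}^b t\,\dn_{p,q,r}^c t$ under the change of variables $s=t^q$; more directly, it matches the intermediate form \eqref{eq:primitive2} evaluated at $x=K_{p,q,r}(k)$ with exponents chosen so that $s^{(a+1)/q-1}=s^0$, $(1-s)^{(b-1)/p}=(1-s)^{-1/p}$, $(1-k^qs)^{(c-1)/r^*}=(1-k^qs)^{(1/r)-1}$. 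Solving gives $a=q-1$, $b=1-p+p$... let me instead just read off the parameters from \eqref{eq:definite}: I want $(a+1)/q=1/q$, i.e.\ $a=0$; $(b-1)/p+1=1-1/p^*\cdot$... choosing $b=1$ makes $(b-1)/p+1=1$ but then the first Beta-argument is $1$, not $1/p^*$; so instead I take $a$ so that $(a+1)/q=1/q$ giving $a=0$, and $b$ so that $(b-1)/p+1=1/p^*$, i.e.\ $(b-1)/p=1/p^*-1=-1/p$, i.e.\ $b=0$. Then $b=0>1-p$ holds since $p>1$, and $(1-c)/r^*=1/r^*$ forces $c=0$. With $a=b=c=0$, \eqref{eq:definite} reads
\[
\int_0^{K_{p,q,r}(k)}dt
=\frac1q B\!\left(\frac1q,\frac1{p^*}\right)
F\!\left(\frac1q,\frac1{r^*};\frac1q+\frac1{p^*};k^q\right);
\]
wait—the left side here is $K_{p,q,r}(k)$ only after recognizing that $\int_0^{K_{p,q,r}(k)}\sn_{p,q,r}^0\cn_{p,q,r}^0\dn_{p,q,r}^0\,dt=K_{p,q,r}(k)$ trivially, but that is not the integral representation \eqref{eq:kpqr}. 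The honest route is to go through \eqref{eq:primitive2}: the substitution $s=\sn_{p,q,r}^q t$ turns $\int_0^{K_{p,q,r}(k)}(\cdots)\,dt$ into $\tfrac1q\int_0^1 s^{(a+1)/q-1}(1-s)^{(b-1)/p}(1-k^qs)^{(c-1)/r^*}\,ds$, and for the specific integrand of $K_{p,q,r}$ one instead substitutes $s=t^q$ directly in \eqref{eq:kpqr}, getting $\tfrac1q\int_0^1 s^{1/q-1}(1-s)^{-1/p}(1-k^qs)^{-1/r^*}\,ds$, which is the $\alpha=1/q$, $\beta=-1/p$, $\gamma=1/r^*$ case of the Euler-type integral $\int_0^1 s^{\alpha-1}(1-s)^\beta(1-ys)^\gamma ds$ cited from \cite[5.8.2 (5)]{Erdelyi1953} evaluated at the endpoint, equivalently the $\gamma\!\to\!\text{(matching)}$ reduction of $F_1$ to $F$ from \cite[5.10 (10)]{Erdelyi1953}; this yields $\tfrac1q B(1/q,1/p^*)\,F(1/q,1/r^*;1/q+1/p^*;k^q)$, and since $\tfrac1q B(1/q,1/p^*)=\tfrac1q B(1/p^*,1/q)=\pi_{p,q}/2$ by the definition of $\pi_{p,q}$, the first formula follows.

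For $E_{p,q,r}(k)$ I would repeat the same substitution $s=t^q$ in \eqref{eq:epqr}:
\[
E_{p,q,r}(k)=\int_0^1 (1-t^q)^{-1/p}(1-k^qt^q)^{1/r}\,dt
=\frac1q\int_0^1 s^{1/q-1}(1-s)^{-1/p}(1-k^qs)^{1/r}\,ds,
\]
which is the same Euler integral but now with $\gamma=1/r=-(-1/r)$, so the cited reduction formula gives $\tfrac1q B(1/q,1/p^*)\,F(1/q,-1/r;1/q+1/p^*;k^q)=\tfrac{\pi_{p,q}}2 F(1/q,-1/r;1/p^*+1/q;k^q)$. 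Both cases therefore reduce to the single identity
\[
\int_0^1 s^{1/q-1}(1-s)^{-1/p}(1-k^qs)^{\mu}\,ds
=B\!\left(\frac1q,\frac1{p^*}\right)F\!\left(\frac1q,-\mu;\frac1q+\frac1{p^*};k^q\right)
\qquad(\mu=1/r^* \text{ or } -1/r),
\]
which is the classical Euler integral representation of the hypergeometric function, valid here because $1/q>0$, $1/p^*>0$, and $k^q\in[0,1)$ so the integral converges and the series converges.

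I do not expect a serious obstacle: the content is entirely bookkeeping of parameters, plus the observation $\tfrac1q B(1/p^*,1/q)=\pi_{p,q}/2$ which is literally the definition given in Section \ref{sec:preparation}. The only point requiring a word of care is the convergence/validity of the hypergeometric reduction: one must note $1/q>0$ and the parameter constraint $\gamma-\alpha-\beta=1/p^*-(-1/p)=...$; concretely for the $F_1\to F$ route one checks $0<\alpha<\gamma$ and $\gamma>\alpha+\beta$, which here amount to $0<1/q$, $1/q<1/q+1/p^*$, and $1/q+1/p^*>1/q+(1-1/p)=1/q+1/p^*$—an equality, not a strict inequality, so it is cleaner to invoke the Euler integral \cite[5.8.2 (5)]{Erdelyi1953} directly (whose hypotheses are just $\alpha>0$ and $|y|<1$) rather than the $F_1$ reduction \cite[5.10 (10)]{Erdelyi1953}. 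With that choice the proof is three lines per formula. Since the statement is already established in \cite[p.41]{Takeuchi2016b} modulo the slightly different normalization of $K_{p,q,r}$ noted before Theorem \ref{thm:choukika}, I would simply present this short self-contained derivation.
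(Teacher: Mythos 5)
Your derivation is correct, but it is only partly the paper's route. For $K_{p,q,r}$ the paper does exactly what you do: it notes the formula is immediate from \eqref{eq:definite} with $a=b=c=0$ (which itself rests on the substitution $s=t^q$ and the Erd\'elyi formulas you cite). For $E_{p,q,r}$, however --- and in order to treat both integrals at once --- the paper argues differently: it expands $(1-k^q\sin_{p,q}^q\theta)^\alpha$ by the binomial series and integrates term by term, using the Wallis-type value $\int_0^{\pi_{p,q}/2}\sin_{p,q}^{qn}\theta\,d\theta=\frac{(1/q)_n}{(1/p^*+1/q)_n}\frac{\pi_{p,q}}{2}$ (Corollary \ref{cor:KT}, i.e.\ Kobayashi--Takeuchi), recognizes the resulting series as $F\bigl(\tfrac1q,-\alpha;\tfrac1{p^*}+\tfrac1q;k^q\bigr)$, and then sets $\alpha=-1/r^*$ and $\alpha=1/r$. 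You instead evaluate \eqref{eq:epqr} directly after $s=t^q$ via the Euler integral representation $\int_0^1 s^{\alpha-1}(1-s)^{\gamma-\alpha-1}(1-ys)^{-\beta}\,ds=B(\alpha,\gamma-\alpha)\,F(\alpha,\beta;\gamma;y)$; this avoids Corollary \ref{cor:KT} and term-by-term integration altogether, while the paper's series argument avoids the $F_1$ machinery and covers an arbitrary exponent $\alpha$ uniformly. Two small points of bookkeeping, neither affecting correctness: in checking the $F_1\to F$ reduction you used the wrong third parameter --- in \eqref{eq:definite2} it is $(a+1)/q+1=1/q+1$, so $\gamma-\alpha-\beta=1-1/p=1/p^*>0$ holds strictly and the reduction does apply; and Erd\'elyi 5.8.2(5) evaluated at $x=1$ still yields an $F_1(\cdot;\cdot,\cdot;\cdot;1,k^q)$, so by itself it does not bypass that reduction --- you need either the reduction (whose hypotheses, as just noted, hold) or the ${}_2F_1$ Euler representation above, whose hypotheses $\gamma>\alpha>0$ (here $1/q+1/p^*>1/q>0$) and $|k^q|<1$ are satisfied.
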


\begin{proof}
The formula for $K_{p,q,r}(k)$ is obtained immediately by setting $a=b=c=0$ in \eqref{eq:definite}. 
However, the following allows us to prove the formulae for $K_{p,q,r}(k)$ and 
$E_{p,q,r}(k)$ simultaneously.

Let $\alpha$ be any real number. Then, by the binomial series, 
\[\int_0^{\pi_{p,q}/2} (1-k^q\sin_{p,q}^q{\theta})^\alpha\,d\theta
=\sum_{n=0}^\infty (-1)^n \binom{\alpha}{n} k^{qn}
\int_0^{\pi_{p,q}/2} \sin_{p,q}^{qn}{\theta}\,d\theta.\]
Clearly, $(-1)^n \binom{\alpha}{n}=(-\alpha)_n/n!$.
Moreover, from \cite[Corollary 4]{Kobayashi-Takeuchi} (or Corollary 
\ref{cor:KT} below),
\[\int_0^{\pi_{p,q}/2} \sin_{p,q}^{qn}{\theta}\,d\theta
=\frac{(1/q)_n}{(1/p^*+1/q)_n} \frac{\pi_{p,q}}{2}.\] 
Hence, 
\begin{align*}
\int_0^{\pi_{p,q}/2} (1-k^q\sin_{p,q}^q{\theta})^\alpha\,d\theta
&=\frac{\pi_{p,q}}{2}
\sum_{n=0}^\infty \frac{(1/q)_n (-\alpha)_n}{(1/p^*+1/q)_n n!}(k^q)^n\\
&=\frac{\pi_{p,q}}{2}F\left(\frac{1}{q},-\alpha;\frac{1}{p^*}+\frac{1}{q};k^q\right).
\end{align*}
If $\alpha=-1/r^*$ and $1/r$ in the above equation, 
we obtain $K_{p,q,r}(k)$ and $E_{p,q,r}(k)$, respectively.
\end{proof}


By specializing \eqref{eq:definite} to a single function
in the integrand and writing the powers $a,\ b$ as multiples of 
$p$ and $q$ with remainders, we will give integral formulae that
generalize the famous Wallis integral formula \eqref{eq:WIF}. Note that for $\dn_{p,q,r}$, the result shows that there is no advantage in paying attention to the remainder of the power divided by $r$, so it is written as the power $c$.

In what follows, $1^*:=\infty,\ \pi_{p,1}:=2p^*,\ \pi_{\infty,q}:=2$.

\begin{thm}[Wallis-type integral formula I]
\label{thm:wallisI}
Let $p, q, r>1,\ k \in [0,1)$ and $n=0,1,2,\ldots$. 
Then, 
for any $R \in (-1,q-1]$,
\begin{equation*}
\int_0^{K_{p,q,r}(k)}\sn_{p,q,r}^{qn+R}(t,k)\,dt
=\frac{(1/u)_n}{(R+1)(1/p^*+1/u)_n} \frac{\pi_{p,u}}{2}
F\left(\frac{1}{u}+n,\frac{1}{r^*};\frac{1}{p^*}+\frac{1}{u}+n;k^q\right),
\end{equation*}
where $1/u:=(R+1)/q$$;$  for any $R \in (1-p,1]$,
\begin{equation*}
\int_0^{K_{p,q,r}(k)}\cn_{p,q,r}^{pn+R}(t,k)\,dt
=\frac{(1/v)_n}{(1/q+1/v)_n} \frac{\pi_{v^*,q}}{2}
F\left(\frac{1}{q},\frac{1}{r^*};\frac{1}{q}+\frac{1}{v}+n;k^q\right),
\end{equation*}
where $1/v:=(R+p-1)/p$$;$ for any $c \in \R$,
\begin{equation*}
\int_0^{K_{p,q,r}(k)}\dn_{p,q,r}^{c}(t,k)\,dt
=\frac{\pi_{p,q}}{2}
F\left(\frac{1}{q},\frac{1-c}{r^*};\frac{1}{p^*}+\frac{1}{q};k^q\right).
\end{equation*}
\end{thm}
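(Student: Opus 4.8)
The plan is to derive all three formulae as specializations of the definite integral formula \eqref{eq:definite} from Theorem \ref{thm:intformula}, after rewriting the single power in the integrand as a "multiple-of-$p$ (or $q$) plus remainder" exponent and then absorbing the integer part of the exponent into the parameters of the underlying generalized trigonometric object. For the $\sn$-formula, I would take $b=c=0$ and $a=qn+R$ in \eqref{eq:definite}; then $(a+1)/q=(R+1)/q+n=1/u+n$ and $(b-1)/p+1=1/p^*$, so the right-hand side becomes
\[
\frac{1}{q}B\!\left(\frac{1}{u}+n,\frac{1}{p^*}\right)
F\!\left(\frac{1}{u}+n,\frac{1}{r^*};\frac{1}{u}+\frac{1}{p^*}+n;k^q\right).
\]
It remains to recognize $\tfrac{1}{q}B(1/u+n,1/p^*)$ as $\tfrac{(1/u)_n}{(R+1)(1/p^*+1/u)_n}\cdot\tfrac{\pi_{p,u}}{2}$. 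This is a pure Pochhammer/Gamma identity: using $B(x,y)=\Gamma(x)\Gamma(y)/\Gamma(x+y)$ and $\Gamma(z+n)=(z)_n\Gamma(z)$ one gets $B(1/u+n,1/p^*)=(1/u)_n/(1/u+1/p^*)_n\cdot B(1/u,1/p^*)$, and $\tfrac{1}{q}B(1/u,1/p^*)=\tfrac{1}{q}\cdot\tfrac{1}{(1/u)}\cdot\tfrac{1}{u}B(1/u,1/p^*)$; since $\pi_{p,u}=\tfrac{2}{u}B(1/u^*,1/u)$... here one must be careful: the definition gives $\pi_{p,u}=\tfrac{2}{u}B(1/p^*,1/u)$, so indeed $\tfrac{1}{q}B(1/u,1/p^*)=\tfrac{u}{2q}\pi_{p,u}=\tfrac{1}{2(R+1)}\pi_{p,u}$ because $u/q=1/(R+1)$ by the definition $1/u=(R+1)/q$. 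Combining the two displays yields the claimed $\sn$-formula.

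For the $\cn$-formula I would instead set $a=c=0$ and $b=pn+R$ in \eqref{eq:definite}. Then $(a+1)/q=1/q$, $(b-1)/p+1=(R+p-1)/p+n=1/v+n$, and the right-hand side is $\tfrac{1}{q}B(1/q,1/v+n)F(1/q,1/r^*;1/q+1/v+n;k^q)$. Again I reduce $\tfrac{1}{q}B(1/q,1/v+n)$ using $B(1/q,1/v+n)=(1/v)_n/(1/q+1/v)_n\cdot B(1/q,1/v)$ and identify $\tfrac{1}{q}B(1/q,1/v)$ with $\tfrac{\pi_{v^*,q}}{2}$: by definition $\pi_{v^*,q}=\tfrac{2}{q}B(1/(v^*)^*,1/q)=\tfrac{2}{q}B(1/v,1/q)$ since $(v^*)^*=v$, which is exactly $\tfrac{2}{q}B(1/q,1/v)$ by symmetry of $B$. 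This gives the middle formula. The third ($\dn$) formula is the most direct: put $a=b=0$, $c$ arbitrary in \eqref{eq:definite}, so $(a+1)/q=1/q$, $(b-1)/p+1=1/p^*$, and the right-hand side is immediately $\tfrac{1}{q}B(1/q,1/p^*)F(1/q,(1-c)/r^*;1/p^*+1/q;k^q)=\tfrac{\pi_{p,q}}{2}F(1/q,(1-c)/r^*;1/p^*+1/q;k^q)$ using $\tfrac{1}{q}B(1/p^*,1/q)=\tfrac{\pi_{p,q}}{2}$.

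The only genuine checks are: (a) that the hypothesis $b>1-p$ of \eqref{eq:definite} is met in each case — for $\cn$ this is $pn+R>1-p$, which holds since $R>1-p$ and $n\ge 0$; for $\sn$ the exponent $b=0$ trivially satisfies it, and the condition $a>-1$ reads $qn+R>-1$, true since $R>-1$; (b) that the convention cases $u=\infty$ (i.e. $R=q-1$ forcing... no, $1/u=(R+1)/q$, and $R\le q-1$ gives $1/u\le 1$, with $R\to -1$ giving $1/u\to 0$, i.e. $u\to\infty$, handled by $\pi_{\infty,q}=2$ and $B(0,\cdot)$ interpreted via the limit $(1/u)_n/(R+1)\to 1/1$ for $n=0$ and $\to 0$ for $n\ge1$ — actually one should note $(1/u)_n/(R+1)=(1/u)(1/u+1)\cdots(1/u+n-1)/(R+1)$ and $1/u=(R+1)/q$ so this is $(R+1)^0(\cdots)/q^n$, finite as $R\to-1$), and similarly the $v=\infty$, $v^*=1$ boundary $R=1$ handled by $\pi_{1,q}=2p^*$ — wait, $\pi_{v^*,q}$ with $v^*\to1^*=\infty$ gives $\pi_{\infty,q}=2$, consistent. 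I expect step (b), tracking these degenerate-parameter conventions so the stated formulae remain literally correct at the endpoints of the $R$-ranges, to be the main (though minor) obstacle; the Pochhammer bookkeeping in the first two formulae is routine once the beta-function normalizations of $\pi_{p,u}$ and $\pi_{v^*,q}$ are pinned down.
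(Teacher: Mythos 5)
Your proposal is correct and follows essentially the same route as the paper: the paper also specializes \eqref{eq:definite} (with $a=qn+R,\ b=c=0$ for $\sn_{p,q,r}$, and analogously for $\cn_{p,q,r}$ and $\dn_{p,q,r}$) and rewrites $\frac{1}{q}B\bigl(n+\frac1u,\frac{1}{p^*}\bigr)$ via the Pochhammer/Beta identity and $\pi_{p,u}=\frac{2}{u}B\bigl(\frac{1}{p^*},\frac{1}{u}\bigr)$. Your worry about degenerate endpoints is harmless: since $R>-1$ and $R>1-p$ strictly, one only meets $u=1$ (resp.\ $v=1$, i.e.\ $v^*=\infty$) at the right endpoints, where the stated conventions agree with the Beta-function values, so no extra limiting argument is needed.
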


\begin{proof}
Letting $a=qn+R,\ b=c=0$ in \eqref{eq:definite},
we have
\begin{equation*}
\int_0^{K_{p,q,r}(k)}\sn_{p,q,r}^{qn+R}{(t,k)}\,dt
=\frac{1}{q}B\left(n+\frac{1}{u},\frac{1}{p^*}\right)
F\left(n+\frac{1}{u},\frac{1}{r^*};n+\frac{1}{u}+\frac{1}{p^*};k^q
\right),
\end{equation*}
where $1/u:=(R+1)/q$. Since
\begin{equation*}
B\left(n+\frac{1}{u},\frac{1}{p^*}\right)
=\frac{(1/u)_n \Gamma(1/u)\Gamma(1/p^*)}{(1/u+1/p^*)_n\Gamma(1/u+1/p^*)}
=\frac{u(1/u)_n}{(1/p^*+1/u)_n}\frac{\pi_{p,u}}{2},
\end{equation*}
we obtain the formula of $\sn_{p,q,r}$.
The formulae of $\cn_{p,q,r},\ \dn_{p,q,r}$ are shown in a 
similar way.
\end{proof}

If $k=0$ in Theorem \ref{thm:wallisI}, 
we obtain the integral formulae for GTFs.

\begin{cor}[{\cite[Theorem 3.2]{Kobayashi-Takeuchi}}]
\label{cor:KT}
Let $p, q>1$ and $n=0,1,2,\ldots$. 
Then, 
for any $R \in (-1,q-1]$,
\begin{equation}
\label{eq:KT}
\int_0^{\pi_{p,q}/2}\sin_{p,q}^{qn+R}t\,dt
=\frac{(1/u)_n}{(R+1)(1/p^*+1/u)_n} \frac{\pi_{p,u}}{2},
\end{equation}
where $1/u:=(R+1)/q$$;$  for any $R \in (1-p,1]$,
\begin{equation*}
\int_0^{\pi_{p,q}/2}\cos_{p,q}^{pn+R}t\,dt
=\frac{(1/v)_n}{(1/q+1/v)_n} \frac{\pi_{v^*,q}}{2},
\end{equation*}
where $1/v:=(R+p-1)/p$.
\end{cor}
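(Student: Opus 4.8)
The plan is to obtain this as the immediate specialization $k=0$ of Theorem~\ref{thm:wallisI}, using three ingredients already at hand: the limiting identities $\sn_{p,q,r}(t,0)=\sin_{p,q}t$ and $\cn_{p,q,r}(t,0)=\cos_{p,q}t$ recorded in Section~\ref{sec:preparation}; the value $K_{p,q,r}(0)=\pi_{p,q}/2$, which is read off directly from \eqref{eq:kpqr}; and the elementary fact that Gauss's hypergeometric series satisfies $F(\alpha,\beta;\gamma;0)=1$ whenever $\gamma$ is not a non-positive integer.

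First I would take the $\sn_{p,q,r}$ identity of Theorem~\ref{thm:wallisI}, valid for every $R\in(-1,q-1]$, and put $k=0$. The left-hand side becomes $\int_0^{\pi_{p,q}/2}\sin_{p,q}^{qn+R}t\,dt$, while on the right the hypergeometric factor $F\!\left(1/u+n,1/r^*;1/p^*+1/u+n;k^q\right)$ collapses to $1$, because its argument vanishes and its lower parameter $1/p^*+1/u+n>0$ is certainly not a non-positive integer; the remaining constant is precisely the right-hand side of \eqref{eq:KT}. Next I would run the same substitution through the $\cn_{p,q,r}$ identity: for $R\in(1-p,1]$ the left-hand side becomes $\int_0^{\pi_{p,q}/2}\cos_{p,q}^{pn+R}t\,dt$, and the factor $F\!\left(1/q,1/r^*;1/q+1/v+n;k^q\right)$ again reduces to $1$, leaving the asserted constant. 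The $\dn_{p,q,r}$ case of Theorem~\ref{thm:wallisI} does not appear in the corollary and is in any event vacuous, since $\dn_{p,q,r}(t,0)\equiv 1$.

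I do not expect any genuine obstacle here: the only things to verify are that the Pochhammer/beta normalizing constants appearing in Theorem~\ref{thm:wallisI} do not involve $k$, so that they pass through the substitution untouched, and that the admissible ranges of $R$ in the corollary coincide with those required in Theorem~\ref{thm:wallisI}, both of which are clear by inspection. It is also worth remarking that the two displayed formulae are exactly \cite[Theorem~3.2]{Kobayashi-Takeuchi}; the purpose of recording the corollary here is to exhibit those classical Wallis-type formulae for generalized trigonometric functions as the $k=0$ shadow of the new Wallis-type formula for GJEFs.
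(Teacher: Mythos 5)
Your proposal is correct and is exactly the paper's own argument: the paper proves this corollary by the one-line observation that setting $k=0$ in Theorem \ref{thm:wallisI} yields the assertions, with the hypergeometric factors reducing to $1$ just as you describe. Your additional checks (the limiting identities $\sn_{p,q,r}(t,0)=\sin_{p,q}t$, $\cn_{p,q,r}(t,0)=\cos_{p,q}t$, $K_{p,q,r}(0)=\pi_{p,q}/2$, and the $k$-independence of the normalizing constants) merely make explicit what the paper leaves implicit.
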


\begin{proof}
Letting $k=0$ in Theorem \ref{thm:wallisI}
yields the assertions.
\end{proof}

\begin{cor}
\label{cor:KTcor}
Let $q>1$ and $n=0,1,2,\ldots$. 
Then, 
\begin{align*}
& \int_0^{\pi_{q^*,q}/2}\sin_{q^*,q}^{qn}t\,dt
=\int_0^{\pi_{q^*,q}/2}\cos_{q^*,q}^{q^*n}t\,dt
=\frac{(1/q)_n}{(2/q)_n}\frac{\pi_{q^*,q}}{2},\\
& \int_0^{\pi_{q^*,q}/2}\sin_{q^*,q}^{qn+q-1}t\,dt
=\int_0^{\pi_{q^*,q}/2}\cos_{q^*,q}^{q^*n+1}t\,dt
=\frac{(1)_n}{(1/q+1)_n}.
\end{align*}
In particular, letting $q=2$ yields the Wallis integral formula \eqref{eq:WIF}.
\end{cor}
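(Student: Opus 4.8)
The plan is to obtain each of the four identities by specializing Corollary~\ref{cor:KT} to the exponent base $p=q^*$ and to the two extreme admissible values of the remainder, and then to recover \eqref{eq:WIF} by putting $q=2$.

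\textbf{The two $\sin_{q^*,q}$ identities.} In the first formula of Corollary~\ref{cor:KT} I would set $p=q^*$; then $p^*=(q^*)^*=q$, so $1/p^*=1/q$ and the integration limit is $\pi_{q^*,q}/2$. Choosing $R=0$ gives $1/u=1/q$, hence $u=q$, $\pi_{p,u}=\pi_{q^*,q}$, $R+1=1$, and $1/p^*+1/u=2/q$, which is exactly $\int_0^{\pi_{q^*,q}/2}\sin_{q^*,q}^{qn}t\,dt=\frac{(1/q)_n}{(2/q)_n}\frac{\pi_{q^*,q}}{2}$. Choosing instead $R=q-1$ (the right endpoint of $(-1,q-1]$) gives $1/u=1$, hence $u=1$; using the convention $\pi_{q^*,1}:=2(q^*)^*=2q$, together with $(1/u)_n=(1)_n$, $R+1=q$, and $1/p^*+1/u=1+1/q$, the prefactor $\pi_{q^*,1}/2=q$ cancels the $q$ coming from $R+1$, leaving $\int_0^{\pi_{q^*,q}/2}\sin_{q^*,q}^{qn+q-1}t\,dt=\frac{(1)_n}{(1/q+1)_n}$.

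\textbf{The two $\cos_{q^*,q}$ identities.} Here I use the second formula of Corollary~\ref{cor:KT} with $p=q^*$, noting $q^*-1=1/(q-1)$. With $R=0$ one has $1/v=(q^*-1)/q^*=1/q$, so $v=q$, $v^*=q^*$, $\pi_{v^*,q}=\pi_{q^*,q}$, and $1/q+1/v=2/q$, yielding $\int_0^{\pi_{q^*,q}/2}\cos_{q^*,q}^{q^*n}t\,dt=\frac{(1/q)_n}{(2/q)_n}\frac{\pi_{q^*,q}}{2}$; since its right-hand side equals that of the $\sin_{q^*,q}^{qn}$ identity, the two integrals coincide. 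With $R=1$ (the right endpoint of $(1-q^*,1]$) one gets $1/v=(1+q^*-1)/q^*=1$, so $v=1$ and $v^*=1^*=\infty$; by the convention $\pi_{\infty,q}:=2$ the prefactor $\pi_{v^*,q}/2$ equals $1$, and with $(1/v)_n=(1)_n$ and $1/q+1/v=1+1/q$ this gives $\int_0^{\pi_{q^*,q}/2}\cos_{q^*,q}^{q^*n+1}t\,dt=\frac{(1)_n}{(1/q+1)_n}$, matching the $\sin_{q^*,q}^{qn+q-1}$ value.

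\textbf{The case $q=2$.} Now $q^*=2$, so $\sin_{q^*,q}=\sin$, $\cos_{q^*,q}=\cos$, and $\pi_{q^*,q}=\pi_2=\pi$. It then remains only to translate the Pochhammer symbols into double factorials via $(1/2)_n=(2n-1)!!/2^n$, $(1)_n=n!=(2n)!!/2^n$, and $(3/2)_n=(2n+1)!!/2^n$, so that $\frac{(1/2)_n}{(1)_n}=\frac{(2n-1)!!}{(2n)!!}$ and $\frac{(1)_n}{(3/2)_n}=\frac{(2n)!!}{(2n+1)!!}$; substituting these recovers \eqref{eq:WIF}. I do not expect any genuine difficulty: the whole argument is substitution into Corollary~\ref{cor:KT}, and the only point demanding care is the bookkeeping of the conjugate exponents and of the boundary conventions $1^*:=\infty$, $\pi_{p,1}:=2p^*$, $\pi_{\infty,q}:=2$, which are precisely what make the endpoint choices $R=q-1$ and $R=1$ collapse to the tidy form $(1)_n/(1/q+1)_n$.
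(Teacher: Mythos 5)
Your proposal is correct and follows essentially the same route as the paper: specialize Corollary \ref{cor:KT} with $p=q^*$ and the remainder values $R=0$, $R=q-1$ (for $\sin_{q^*,q}$) and $R=0$, $R=1$ (for $\cos_{q^*,q}$), using the conventions $\pi_{p,1}:=2p^*$ and $\pi_{\infty,q}:=2$, then convert Pochhammer symbols to double factorials for $q=2$. The only difference is cosmetic: the paper's one-line proof cites only \eqref{eq:KT} and notes in a remark that the $\sin$/$\cos$ equalities can alternatively be seen from $\sin_{q^*,q}t=\cos_{q^*,q}^{q^*-1}(\pi_{q^*,q}/2-t)$, whereas you compute the cosine integrals directly from the second formula of Corollary \ref{cor:KT}.
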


\begin{proof}
They are easily obtained by setting $p=q^*$ and $R$ appropriately
in \eqref{eq:KT} of Corollary \ref{cor:KT}. 
When $q=2$, \eqref{eq:WIF} follows immediately from these formulae above if we note that $(1/2)_n/(1)_n=(2n-1)!!/(2n)!!$ and
$(1)_n/(3/2)_n=(2n)!!/(2n+1)!!$.
\end{proof}

\begin{rem}
From \cite[Appendix]{Kobayashi-Takeuchi}, we know that
\[\sin_{q^*,q}t=\cos_{q^*,q}^{q^*-1}\left(\frac{\pi_{q^*,q}}{2}-t\right) \quad (t \in [0,\pi_{q^*,q}/2]),\]
and the first equality in each of the formulae in Corollary \ref{cor:KTcor}
can also be proved from that fact.
\end{rem}

Next, we propose another Wallis-type integral formula that is different from Theorem \ref{thm:wallisI}. In preparation for this, we will construct a recurrence relation that is useful in its own right.
Let
\begin{equation}
I_a=I_a(k):=\int_0^{K_{p,q,r}(k)} \sn_{p,q,r}^a{(t,k)}\,dt.
\end{equation}
$I_a(k)$  satisfies the following recurrence relation.

\begin{thm}
\label{thm:Iarelation}
Let $p, q, r>1$ and $k \in [0,1)$. Then, for any $a>-1$,
\begin{equation}
\label{eq:Iarelation}
\left(\frac{1}{p^*}+\frac{a+1}{q}+\frac{1}{r}\right)k^qI_{a+2q}
-\left(\frac{1}{p^*}+\frac{a+1}{q}+\left(\frac{a+1}{q}+\frac{1}{r}\right)k^q\right)I_{a+q}
+\frac{a+1}{q}I_a=0.
\end{equation}
\end{thm}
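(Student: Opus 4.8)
The plan is to pull everything back to a single Euler-type integral and then use an exact-derivative (integration-by-parts) identity. By the substitution $\sn_{p,q,r}^q t = s$ already used in \eqref{eq:primitive2} (with $b=c=0$) we have
\[
I_a=\frac{1}{q}\int_0^1 s^{(a+1)/q-1}(1-s)^{-1/p}(1-k^qs)^{-1/r^*}\,ds,
\]
and $I_{a+q}$, $I_{a+2q}$ are obtained by replacing the exponent $(a+1)/q-1$ by $(a+1)/q$ and $(a+1)/q+1$ respectively. Writing $\alpha:=(a+1)/q$ (so $\alpha>0$ since $a>-1$), the assertion \eqref{eq:Iarelation} becomes a linear relation with polynomial-in-$k^q$ coefficients among the three moments $J_i:=\int_0^1 s^{\alpha-1+i}(1-s)^{-1/p}(1-k^qs)^{-1/r^*}\,ds$, $i=0,1,2$, namely $J_i=qI_{a+iq}$.

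To produce such a relation I would integrate over $[0,1]$ the derivative of $g(s):=s^{\alpha}(1-s)^{1/p^*}(1-k^qs)^{1/r}$. The boundary terms vanish: at $s=0$ because $\alpha>0$, and at $s=1$ because $1/p^*>0$ (the factor $1-k^qs$ stays bounded away from $0$ since $k\in[0,1)$). Using $1/p^*-1=-1/p$ and $1/r-1=-1/r^*$,
\[
g'(s)=\alpha s^{\alpha-1}(1-s)^{1/p^*}(1-k^qs)^{1/r}-\frac{1}{p^*}s^{\alpha}(1-s)^{-1/p}(1-k^qs)^{1/r}-\frac{k^q}{r}s^{\alpha}(1-s)^{1/p^*}(1-k^qs)^{-1/r^*}.
\]
In each term I would peel off the surplus factors $(1-s)$ and/or $(1-k^qs)$ to return to the common weight $(1-s)^{-1/p}(1-k^qs)^{-1/r^*}$; expanding $(1-s)(1-k^qs)=1-(1+k^q)s+k^qs^2$ in the first term and the single linear factors in the other two, and integrating $g'$ to $0$, gives
\[
0=\alpha\bigl(J_0-(1+k^q)J_1+k^qJ_2\bigr)-\frac{1}{p^*}\bigl(J_1-k^qJ_2\bigr)-\frac{k^q}{r}\bigl(J_1-J_2\bigr).
\]

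Collecting coefficients of $J_0,J_1,J_2$ yields
\[
k^q\Bigl(\alpha+\frac{1}{p^*}+\frac{1}{r}\Bigr)J_2-\Bigl(\alpha+\frac{1}{p^*}+\Bigl(\alpha+\frac{1}{r}\Bigr)k^q\Bigr)J_1+\alpha J_0=0,
\]
and substituting $J_i=qI_{a+iq}$, $\alpha=(a+1)/q$, then dividing by $q$, gives exactly \eqref{eq:Iarelation}. There is no genuine obstacle here beyond careful bookkeeping: the only steps needing a word of justification are the vanishing of the boundary terms of $g$ (this is where the hypothesis $a>-1$ is used, together with $p,r>1$) and keeping the coefficient algebra straight. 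As an alternative route one could feed the closed form of $I_a$ from \eqref{eq:definite} (with $b=c=0$), which expresses $I_a$ as a beta factor times $F\bigl((a+1)/q,1/r^*;(a+1)/q+1/p^*;k^q\bigr)$, into Gauss's contiguous relations for $F$; but the exact-derivative argument above is shorter and entirely self-contained.
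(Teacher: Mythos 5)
Your proof is correct, and it is essentially the paper's proof in different clothing: the paper differentiates $\sn_{p,q,r}^{a+1}x\,\cn_{p,q,r}^{p-1}x\,\dn_{p,q,r}^{r^*-1}x$ and integrates over $[0,K_{p,q,r}(k)]$, and your $g(s)=s^{\alpha}(1-s)^{1/p^*}(1-k^qs)^{1/r}$ is exactly this function after the substitution $s=\sn_{p,q,r}^q t$ (note $\cn^{p-1}=(1-\sn^q)^{1/p^*}$ and $\dn^{r^*-1}=(1-k^q\sn^q)^{1/r}$), with the same vanishing boundary terms. So the argument matches the paper's, merely carried out in the Euler-integral variable.
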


\begin{proof}
From \eqref{eq:cs}--\eqref{eq:dnp'}, we have
\begin{multline*}
(\sn_{p,q,r}^{a+1}{x}\cn_{p,q,r}^{p-1}{x}\dn_{p,q,r}^{r^*-1}{x})'\\
=(a+1)\sn_{p,q,r}^{a}{x}
-\left(a+1+\frac{q}{p^*}+\left(a+1+\frac{q}{r}\right)k^q\right)\sn_{p,q,r}^{a+q}{x}\\
+\left(a+1+\frac{q}{p^*}+\frac{q}{r}\right)k^q\sn_{p,q,r}^{a+2q}{x}.
\end{multline*}
Integrating both sides on $[0,K_{p,q,r}(k)]$, we obtain \eqref{eq:Iarelation}.
\end{proof}

In particular, in the recurrence relation \eqref{eq:Iarelation}, 
setting $k=0$ yields the relation for the case of GTFs, and setting $p=q=r=2$ yields the relation for the case of classical Jacobi elliptic functions.

\begin{cor}[{\cite{Kobayashi-Takeuchi}, \cite[310.05, 310.06]{Byrd-Friedman1971}}]
$I_a=\int_0^{\pi_{p,q}/2}\sin_{p,q}^a{t}\,dt$ satisfies
\begin{equation*}
I_{a+q}=\frac{a+1}{a+1+q/p^*}I_{a}.
\end{equation*}
$I_a=\int_0^{K(k)} \sn^a{(t,k)}\,dt$ satisfies
\begin{equation*}
(a+3)k^2I_{a+4}-(a+2)(1+k^2)I_{a+2}+(a+1)I_a=0.
\end{equation*}
\end{cor}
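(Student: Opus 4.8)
The plan is to obtain both formulae as direct specializations of the recurrence \eqref{eq:Iarelation} in Theorem \ref{thm:Iarelation}, after first recording the two identifications implicit in the statement. When $k=0$ the definitions in Section \ref{sec:preparation} give $K_{p,q,r}(0)=\pi_{p,q}/2$ and $\sn_{p,q,r}(t,0)=\sin_{p,q}t$, so $I_a(0)=\int_0^{\pi_{p,q}/2}\sin_{p,q}^a t\,dt$; when $p=q=r=2$ the map $\phi_2$ is the identity and $H_{2,2,2}(\cdot,k)$ reduces to the classical incomplete elliptic integral of the first kind, so $\sn_{2,2,2}(\cdot,k)=\sn(\cdot,k)$, $K_{2,2,2}(k)=K(k)$, and $I_a=\int_0^{K(k)}\sn^a(t,k)\,dt$.

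For the first formula I would set $k=0$ in \eqref{eq:Iarelation}. The two terms carrying the factor $k^q$ vanish, leaving $-\bigl(\tfrac{1}{p^*}+\tfrac{a+1}{q}\bigr)I_{a+q}+\tfrac{a+1}{q}I_a=0$. Since $a>-1$ and $p^*>1$, the coefficient $\tfrac{1}{p^*}+\tfrac{a+1}{q}$ is strictly positive, so division is legitimate and gives $I_{a+q}=\dfrac{(a+1)/q}{\,1/p^*+(a+1)/q\,}\,I_a=\dfrac{a+1}{a+1+q/p^*}\,I_a$, as claimed.

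For the second formula I would put $p=q=r=2$, so $p^*=2$, $k^q=k^2$, $I_{a+2q}=I_{a+4}$, and $I_{a+q}=I_{a+2}$. A short computation of the three coefficients in \eqref{eq:Iarelation} gives $\tfrac{1}{p^*}+\tfrac{a+1}{q}+\tfrac{1}{r}=\tfrac{a+3}{2}$, then $\tfrac{1}{p^*}+\tfrac{a+1}{q}+\bigl(\tfrac{a+1}{q}+\tfrac{1}{r}\bigr)k^2=\tfrac{a+2}{2}(1+k^2)$, and $\tfrac{a+1}{q}=\tfrac{a+1}{2}$; multiplying the resulting identity through by $2$ yields $(a+3)k^2I_{a+4}-(a+2)(1+k^2)I_{a+2}+(a+1)I_a=0$, which for $a=0,1,2,\dots$ recovers \eqref{eq:Izenka}.

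There is essentially no obstacle here beyond bookkeeping; the one point worth flagging is that the $k=0$ specialization lowers the order of the recurrence, because the leading coefficient $\bigl(\tfrac{1}{p^*}+\tfrac{a+1}{q}+\tfrac{1}{r}\bigr)k^q$ becomes $0$. This is precisely why the generalized trigonometric case collapses to a two-term relation while the elliptic case remains a three-term one, and it is the reason the two parts of the corollary look so different despite coming from the same identity.
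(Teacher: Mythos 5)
Your proposal is correct and follows exactly the route the paper intends: the corollary is obtained by specializing the recurrence \eqref{eq:Iarelation} of Theorem \ref{thm:Iarelation} to $k=0$ (using $K_{p,q,r}(0)=\pi_{p,q}/2$, $\sn_{p,q,r}(t,0)=\sin_{p,q}t$) and to $p=q=r=2$, and your coefficient computations check out. The remark about the leading coefficient vanishing at $k=0$, which collapses the three-term relation to a two-term one, is a nice touch but matches the paper's implicit reasoning.
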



We prepare symbols before stating another Wallis-type integral formula.
We had defined $K_{p,q,r}(k)$ and $E_{p,q,r}(k)$ for $p, q, r>1$, 
but we extend them to $q=1$ by the equations \eqref{eq:kpqr} and \eqref{eq:epqr} 
with $q=1$ in their respective second integral expressions from the last
(i.e., expression without trigonometric functions). That is,
for $p, r>1$,
\begin{align*}
& K_{p,1,r}(k):=\int_0^1 \frac{dt}{(1-t)^{1/p}(1-kt)^{1-1/r}},\\
& E_{p,1,r}(k):=\int_0^1 \frac{(1-kt)^{1/r}}{(1-t)^{1/p}}\,dt.
\end{align*}

\begin{thm}[Wallis-type integral formula II]
\label{thm:wallisII}
Let $p, q, r>1,\ k \in (0,1)$ and $n=0,1,2,\ldots$. 
Then, 
for any $R \in (-1,q-1]$,
\begin{multline*}
\int_0^{K_{p,q,r}(k)}\sn_{p,q,r}^{qn+R}(t,k)\,dt
=\frac{1}{(R+1)(1/p^*+1/u+1/r)_n}\\
\times
\frac{(w_{2,1}^{(n)}+k^qw_{2,2}^{(n)})K_{p,u,r}(k^{R+1})
-w_{2,1}^{(n)}E_{p,u,r}(k^{R+1})}{k^{q(n+1)}},
\end{multline*}
where $1/u:=(R+1)/q$, and 
the matrix $W_n:=(w_{i,j}^{(n)})_{1 \le i,j \le 2}$ is
\begin{equation}
\label{eq:W_n}
W_n:=\begin{cases}
E & (n=0),\\
A_{n-1}A_{n-2}\cdots A_2A_1A_0 & (n=1,2,3,\ldots)
\end{cases}
\end{equation}
with the identity matrix $E$ and
\begin{equation}
\label{eq:A_n}
A_n:=
\begin{pmatrix}
n+1/u+1/p^*+(n+1/u+1/r)k^q 
& -1/u-n\\
(n+1/u+1/p^*+1/r)k^q
&0
\end{pmatrix}.
\end{equation}
\end{thm}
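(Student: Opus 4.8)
The plan is to combine the recurrence relation of Theorem~\ref{thm:Iarelation} with the closed form of Theorem~\ref{thm:wallisI} and its reduction to complete $(p,u,r)$-elliptic integrals. First I would apply Theorem~\ref{thm:Iarelation} to the shifted exponent $a=qm+R$ for $m=0,1,\dots,n-1$. Writing $1/u:=(R+1)/q$ we have $(a+1)/q = m+1/u$, so the recurrence becomes
\begin{equation*}
\Bigl(\tfrac{1}{p^*}+m+\tfrac1u+\tfrac1r\Bigr)k^q I_{q(m+2)+R}
-\Bigl(\tfrac{1}{p^*}+m+\tfrac1u+\bigl(m+\tfrac1u+\tfrac1r\bigr)k^q\Bigr) I_{q(m+1)+R}
+\Bigl(m+\tfrac1u\Bigr) I_{qm+R}=0,
\end{equation*}
which is a second-order linear recurrence in $m$ for the sequence $J_m := I_{qm+R}$. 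Note the coefficients here are precisely the entries of the matrix $A_m$ in \eqref{eq:A_n}: solving for $k^q J_{m+2}$ in terms of $J_{m+1}$ and $J_m$ shows that the vector $(k^{q(m+2)}J_{m+1},\,k^{q(m+1)}J_m)^{\mathsf T}$ (or a similar normalization) is obtained from $(k^{q(m+1)}J_m,\,k^{qm}J_{m-1})^{\mathsf T}$ by multiplication by $A_m$, up to the scalar factor $(1/p^*+m+1/u+1/r)$ coming from the leading coefficient. I would check the bookkeeping so that the accumulated denominator is exactly $\prod_{m=0}^{n-1}(1/p^*+1/u+1/r+m) = (1/p^*+1/u+1/r)_n$, matching the Pochhammer symbol in the statement.

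Second, I need the two base cases $J_0 = I_R$ and $J_1 = I_{q+R}$. These I would evaluate using Theorem~\ref{thm:wallisI} (or directly from \eqref{eq:definite} with $a=R$ and $a=q+R$, $b=c=0$), expressing them in terms of the complete $(p,u,r)$-elliptic integrals with modulus $k^{R+1}=k^{q/u}$. Concretely, the substitution $\sn_{p,q,r}^q t = s$ in \eqref{eq:primitive2} produces $\frac1q\int_0^1 s^{1/u-1}(1-s)^{-1/p}(1-k^q s)^{1/r-1}\,ds$ for $J_0$, and one recognizes this (after matching parameters) as a combination of $K_{p,u,r}$ and $E_{p,u,r}$ evaluated at the modulus $k^{R+1}$ — indeed $K_{p,u,r}(\kappa)=\int_0^1 (1-t^u)^{-1/p}(1-\kappa^u t^u)^{1/r-1}\,dt$ with $\kappa^u = k^q$ forces $\kappa = k^{q/u}=k^{R+1}$. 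Using $\dn^{r^*}+k^q\sn^q = 1$ type identities, the extra algebraic factor in $J_1$ is absorbed into a $K$-minus-$E$ combination, which is why both $K_{p,u,r}(k^{R+1})$ and $E_{p,u,r}(k^{R+1})$ appear. This sets $W_0 = E$ (the identity) correctly and pins down the initial vector as $(K_{p,u,r}(k^{R+1}),\,E_{p,u,r}(k^{R+1}))$ up to the affine combination visible in the numerator $(w_{2,1}^{(n)}+k^q w_{2,2}^{(n)})K - w_{2,1}^{(n)}E$.

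Third, I would iterate: $W_n = A_{n-1}\cdots A_1 A_0$ applied to the base vector, then read off the components to get the stated formula, tracking the factor $1/(R+1)$ from $I_R = \frac1q B(1/u,1/p^*) F(\cdots)$ and the power $k^{q(n+1)}$ in the denominator that arises from repeatedly dividing by $k^q$ to solve the recurrence forward. A short induction on $n$ confirms that the coefficient structure $(w_{2,1}^{(n)} + k^q w_{2,2}^{(n)}, -w_{2,1}^{(n)})$ in the bottom row of $W_n$ is preserved under left-multiplication by $A_n$ — this is the cleanest way to present it, since the matrix form essentially encodes the induction. \textbf{The main obstacle} I anticipate is purely the bookkeeping: getting the normalization of the iterated vector exactly right so that the three scalar prefactors — $1/(R+1)$, the Pochhammer denominator $(1/p^*+1/u+1/r)_n$, and the power $k^{q(n+1)}$ — come out as stated, and verifying that the base case $J_1$ really does reduce to the advertised $K$-and-$E$ combination rather than to $K_{p,u,r}$ at some other modulus or with a spurious beta factor. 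Once the $n=0,1$ cases are checked against Theorem~\ref{thm:wallisI} and the affine-combination invariance of the bottom row of $W_n$ is established, the induction closes immediately.
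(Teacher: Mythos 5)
Your proposal follows essentially the same route as the paper: rewrite the recurrence of Theorem \ref{thm:Iarelation} with $a=qm+R$ in matrix form with the matrices $A_m$, accumulate the scalar factors into $(1/p^*+1/u+1/r)_n\,k^{qn}$, and evaluate the two base cases $I_R=\frac{1}{R+1}K_{p,u,r}(k^{R+1})$ and $I_{q+R}=\frac{1}{(R+1)k^q}\bigl(K_{p,u,r}(k^{R+1})-E_{p,u,r}(k^{R+1})\bigr)$ before substituting into the bottom row of $W_n$. The only (harmless) deviation is in how $I_{q+R}$ is reduced: you propose using the identity $\dn_{p,q,r}^{r^*}x+k^q\sn_{p,q,r}^qx=1$ together with \eqref{eq:definite}, whereas the paper applies a contiguous relation for hypergeometric series to the expression from Theorems \ref{thm:wallisI} and \ref{thm:choukika}; both yield the same base values.
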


\begin{proof}
From Theorem \ref{thm:Iarelation}, 
\begin{equation*}
I_{a+2q}=\frac{1}{(a+1+q/p^*+q/r)k^q}
\left(\left(a+1+\frac{q}{p^*}+\left(a+1+\frac{q}{r}\right)k^q\right)I_{a+q}-(a+1)I_a\right).
\end{equation*}
Let $a=qn+R$, where $n=0,1,2,\ldots$, $R \in (-1,q-1]$, and
\begin{equation*}
\mathbb{I}_{n}
=\begin{pmatrix}
I_{q(n+1)+R} \\ I_{qn+R}
\end{pmatrix}.
\end{equation*}
Then, we see that
\begin{equation*}
\mathbb{I}_{n+1}=\frac{1}{(n+1/u+1/p^*+1/r)k^q}A_n \mathbb{I}_n,
\end{equation*}
where $A_n$ is the matrix defined as \eqref{eq:A_n}.
It follows from the recurrence formula that
\[\mathbb{I}_n
=\frac{1}{(1/u+1/p^*+1/r)_nk^{qn}}W_n\mathbb{I}_0,\]
that is,
\begin{equation}
\label{eq:Iqn+R}
I_{qn+R}
=\frac{1}{(1/u+1/p^*+1/r)_nk^{qn}}
(w_{2,1}^{(n)}I_{q+R}+w_{2,2}^{(n)}I_R),
\end{equation}
where $W_n=(w_{i,j}^{(n)})_{1\leq i,j \leq 2}$ is the matrix defined as
\eqref{eq:W_n}.
Here, Theorems \ref{thm:wallisI} and \ref{thm:choukika} yield
\begin{align*}
& I_R=\frac{1}{R+1}K_{p,u,r}(k^{R+1}),\\
& I_{q+R}
=\frac{1}{q(1/p^*+1/u)}\frac{\pi_{p,u}}{2}
F\left(\frac{1}{u}+1,\frac{1}{r^*};\frac{1}{p^*}+\frac{1}{u}+1;k^q\right).
\end{align*}
To compute $I_{q+R}$, we use
\[F(\alpha+1,\beta+1;\gamma+1;x)
=\frac{\gamma}{\alpha x}
(F(\alpha,\beta+1;\gamma;x)-F(\alpha,\beta;\gamma;x)),\]
which can be proved by straight computation from the definition of hypergeometric series.
Then, Theorem \ref{thm:choukika} yields
\begin{align*}
I_{q+R}
&=\frac{1}{(R+1)k^q}(K_{p,u,r}(k^{R+1})-E_{p,u,r}(k^{R+1})).
\end{align*}
Therefore, by \eqref{eq:Iqn+R},
\begin{multline*}
I_{qn+R}
=\frac{1}{(R+1)(1/p^*+1/u+1/r)_n}\\
\times
\frac{(w_{2,1}^{(n)}+k^qw_{2,2}^{(n)})K_{p,u,r}(k^{R+1})
-w_{2,1}^{(n)}E_{p,u,r}(k^{R+1})}{k^{q(n+1)}}.
\end{multline*}
\end{proof}

\begin{rem}
The formulae for $\int \cn_{p,q,r}^bt\,dt$ and $\int \dn_{p,q,r}^ct\,dt$ can be deduced
in the same way and we leave the proofs for readers.
In order to derive the recurrence formulae as given in Theorem \ref{thm:Iarelation}, 
we can differentiate $\sn_{p,q,r}x \cn_{p,q,r}^{b+p-1}x\dn_{p,q,r}^{r^*-1}x$
for $\int \cn_{p,q,r}^bt\,dt$ and 
$\sn_{p,q,r}x \cn_{p,q,r}^{p-1}x\dn_{p,q,r}^{c+r^*-1}x$ for 
$\int \dn_{p,q,r}^ct\,dt$.
\end{rem}

In the following, Theorem \ref{thm:wallisII} is used to give the integral formulae for the powers of the classical Jacobi elliptic functions. Naturally, these can also be obtained using the recurrence relation \eqref{eq:Izenka}.

\begin{exmp}
Putting $p=q=r=2$ and $R=0$ in Theorem \ref{thm:wallisII}, we obtain
\[I_{2n}=\int_0^{K(k)}\sn^{2n}(t,k)\,dt
=\frac{(w_{2,1}^{(n)}+k^2w_{2,2}^{(n)})K(k)
-w_{2,1}^{(n)}E(k)}{(\frac32)_nk^{2(n+1)}},\]
and
\[A_n=
\begin{pmatrix}
(1+n)(1+k^2) & -\frac12-n \\
(\frac32+n)k^2 & 0
\end{pmatrix}.
\]

For $n=0$, since $W_0=E$, we see that $w_{2,1}^{(0)}=0$ and $w_{2,2}^{(0)}=1$.
Therefore, $I_0=K(k)$.

For $n=1$,
\[W_1=A_0=
\begin{pmatrix}
1+k^2 & -\frac12 \\
w_{2,1}^{(1)} & w_{2,2}^{(1)}
\end{pmatrix},\]
where $w_{2,1}^{(1)}=\frac32k^2$ and $w_{2,2}^{(1)}=0$.
Therefore,
\[I_2=\frac{K(k)-E(k)}{k^2}.\]

For $n=2$, 
\[W_2=A_1A_0=
\begin{pmatrix}
2+\frac74k^2+2k^4 & -1-k^2 \\
w_{2,1}^{(2)} & w_{2,2}^{(2)}
\end{pmatrix},\]
where $w_{2,1}^{(2)}=\frac52k^2+\frac52k^4$ and $w_{2,2}^{(2)}=-\frac54k^2$.
Therefore,
\[I_4=\frac{(2+k^2)K(k)-2(1+k^2)E(k)}{3k^4}.\]

For $n=3$,
\[W_3=A_2A_1A_0=
\begin{pmatrix}
(1+k^2)(6-k^2+6k^4) & -3-\frac{23}{8}k^2-3k^4 \\
w_{2,1}^{(3)} & w_{2,2}^{(3)}
\end{pmatrix},\]
where $w_{2,1}^{(3)}=k^2(7+\frac{49}{8}k^2+7k^4)$ and $w_{2,2}^{(3)}=-\frac72k^2(1+k^2)$.
Therefore,
\[I_6=\frac{(8+3k^2+4k^4)K(k)-(8+7k^2+8k^4)E(k)}{15k^6}.\]
\end{exmp}

\begin{exmp}
Putting $p=q=r=2$ and $R=1$ in Theorem \ref{thm:wallisII}, we obtain
\[I_{2n+1}=\int_0^{K(k)}\sn^{2n+1}(t,k)\,dt
=\frac{(w_{2,1}^{(n)}+k^2w_{2,2}^{(n)})K_{2,1,2}(k^2)
-w_{2,1}^{(n)}E_{2,1,2}(k^2)}{2(2)_nk^{2(n+1)}},\]
and
\[A_n=
\begin{pmatrix}
(\frac32+n)(1+k^2) & -1-n \\
(2+n)k^2 & 0
\end{pmatrix}.
\]
Here, $K_{2,1,2}(k)$ and $E_{2,1,2}(k)$ are 
\begin{align*}
K_{2,1,2}(k)
&=\int_0^1 \frac{dt}{\sqrt{(1-t)(1-kt)}}
=\frac{1}{\sqrt{k}}\log{\frac{1+\sqrt{k}}{1-\sqrt{k}}},\\
E_{2,1,2}(k)
&=\int_0^1 \sqrt{\frac{1-kt}{1-t}}\,dt
=1+\frac{1-k}{2\sqrt{k}}\log{\frac{1+\sqrt{k}}{1-\sqrt{k}}};
\end{align*}
hence, 
\[I_{2n+1}
=\frac{1}{2(2)_nk^{2(n+1)}}\left(-w_{2,1}^{(n)}
+\left(\frac{1+k^2}{2k}w_{2,1}^{(n)}+kw_{2,2}^{(n)}\right)
\log{\frac{1+k}{1-k}}\right).\]

For $n=0$, since $W_0=E$, we see that $w_{2,1}^{(0)}=0$ and $w_{2,2}^{(0)}=1$. Therefore,
\[I_1=\frac{1}{2k}\log{\frac{1+k}{1-k}}.\]

For $n=1$,
\[W_1=A_0=
\begin{pmatrix}
\frac32(1+k^2) & -1 \\
w_{2,1}^{(1)} & w_{2,2}^{(1)}
\end{pmatrix},\]
where $w_{2,1}^{(1)}=2k^2$ and $w_{2,2}^{(1)}=0$.
Therefore,
\[I_3=-\frac{1}{2k^2}+\frac{1+k^2}{4k^3}\log{\frac{1+k}{1-k}}.\]

For $n=2$, 
\[W_2=A_1A_0=
\begin{pmatrix}
\frac{15}{4}(1+k^2)^2-4k^2 & -\frac52(1+k^2)\\
w_{2,1}^{(2)} & w_{2,2}^{(2)}
\end{pmatrix},\]
where $w_{2,1}^{(2)}=\frac92k^2(1+k^2)$ and $w_{2,2}^{(2)}=-3k^2$.
Therefore,
\[I_5=-\frac{3(1+k^2)}{8k^4}+\frac{3k^4+2k^2+3}{16k^5}
\log{\frac{1+k}{1-k}}.\]
\end{exmp}

\section{Legendre-type relation}
\label{sec:legendre}

In this section, we prove a Legendre-type relation that holds for GCEIs with three parameters. In fact, this relation formula has already been obtained in the third author's paper \cite{Takeuchi2016b}, and it turns out to be equivalent to Elliott's identity in hypergeometric series theory. In the present paper, we provide another proof to clarify the meaning of this relation formula, which was unclear in \cite{Takeuchi2016b}. A corresponding consideration in the theory of hypergeometric series is given by Anderson, Vamanamurthy and Vuorinen \cite{AVV2001}.

Throughout this section, we will denote 
\[K_{p,q,r}'(k):=K_{p,q,r}(k_{q,r}'), \quad 
E_{p,q,r}'(k):=E_{p,q,r}(k_{q,r}'),\]
where $k'_{q,r}:=(1-k^r)^{1/q}$,
according to the traditional notation. 
In the following, these functions with $q$ and $r$ interchanged appear exclusively. The expressions are specifically written as follows:
\begin{align*}
K_{p,r,q}'(k)&=K_{p,r,q}(k_{r,q}')=\int_0^{\pi_{p,r}/2}
\frac{d\theta}{(1-(1-k^q)\sin_{p,r}^r\theta)^{1-1/q}},\\
E_{p,r,q}'(k)&=E_{p,r,q}(k_{r,q}')=\int_0^{\pi_{p,r}/2}
(1-(1-k^q)\sin_{p,r}^r\theta)^{1/q}\,d\theta.
\end{align*}
From now on, unless otherwise noted, 
\[k':=k_{r,q}'=(1-k^q)^{1/r}.\]

Moreover, the constant
\[\alpha:=\frac{1}{q}+\frac{1}{r}-\frac{1}{p}\]
is important.

\begin{lem}
\label{lem:diff}
\begin{align}
\label{eq:dK}
\frac{dK_{p,q,r}(k)}{dk}&=\dfrac{-(\alpha q-k^q)K_{p,q,r}(k)+\alpha q E_{p,q,r}(k)}{k(k')^r},\\
\label{eq:dE}
\frac{dE_{p,q,r}(k)}{dk}&=\frac{q(-K_{p,q,r}(k)+E_{p,q,r}(k))}{rk},\\
\label{eq:dK'}
\frac{dK_{p,r,q}'(k)}{dk}&=\frac{q((\alpha r-(k')^r)K_{p,r,q}'(k)-\alpha r E_{p,r,q}'(k))}{rk(k')^r},\\
\label{eq:dE'}
\frac{dE_{p,r,q}'(k)}{dk}&=\frac{k^{q-1}(K_{p,r,q}'(k)-E_{p,r,q}'(k))}{(k')^r}.
\end{align}
\end{lem}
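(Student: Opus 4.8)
The plan is to establish \eqref{eq:dE} and \eqref{eq:dK} by differentiating the integral representations \eqref{eq:kpqr} and \eqref{eq:epqr} under the integral sign, and then to deduce \eqref{eq:dK'} and \eqref{eq:dE'} from them by the chain rule. Differentiation under the integral sign is legitimate for $k$ in a compact subset of $(0,1)$: the only singularity of the integrands is at $t=1$, where the factor $(1-t^q)^{-1/p}$ is integrable because $1/p<1$, and the same bound controls the $k$-derivatives. For \eqref{eq:dE}, differentiating \eqref{eq:epqr} gives
\[\frac{dE_{p,q,r}(k)}{dk}=-\frac{q}{r}k^{q-1}\int_0^1\frac{t^q(1-k^qt^q)^{1/r-1}}{(1-t^q)^{1/p}}\,dt,\]
and writing $k^qt^q=1-(1-k^qt^q)$ splits this integral into $K_{p,q,r}(k)$ (from the exponent $1/r-1=-1/r^*$) minus $E_{p,q,r}(k)$; this is exactly \eqref{eq:dE}.

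The formula \eqref{eq:dK} is the one requiring real work. The same differentiation of \eqref{eq:kpqr} gives $dK_{p,q,r}(k)/dk=(q/r^*)k^{q-1}D$, where $D:=\int_0^1(1-t^q)^{-1/p}(1-k^qt^q)^{1/r-2}t^q\,dt$, and $k^qt^q=1-(1-k^qt^q)$ reduces this to $D=k^{-q}(C-K_{p,q,r}(k))$ with $C:=\int_0^1(1-t^q)^{-1/p}(1-k^qt^q)^{1/r-2}\,dt$ carrying the unwanted higher power $2-1/r$. To eliminate $C$, I would integrate the total derivative $\frac{d}{dt}\bigl[t(1-t^q)^{1/p^*}(1-k^qt^q)^{-1/r^*}\bigr]$ over $[0,1]$: its boundary values vanish (at $t=1$ because $1/p^*>0$, at $t=0$ because of the factor $t$). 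Expanding the derivative and rewriting each term (using $k^qt^q=1-(1-k^qt^q)$ and $(1-t^q)^{1/p^*}=(1-t^q)(1-t^q)^{-1/p}$) so that every summand is an integral with $(1-t^q)$ only to the power $-1/p$, together with the identity $\int_0^1(1-t^q)^{-1/p}(1-k^qt^q)^{1/r-1}t^q\,dt=k^{-q}(K_{p,q,r}(k)-E_{p,q,r}(k))$, one obtains a linear relation among $K_{p,q,r}(k)$, $E_{p,q,r}(k)$ and $C$. Solving it for $C$, substituting into $D$, and simplifying with the two identities $(k')^r=1-k^q$ and $\alpha q=1+q/p^*-q/r^*$ gives \eqref{eq:dK}. (As a sanity check, $p=q=r=2$ with $\alpha=1/2$ recovers the classical $dK/dk=(E-(1-k^2)K)/(k(1-k^2))$ and $dE/dk=(E-K)/k$.)

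For \eqref{eq:dK'} and \eqref{eq:dE'}, the point is that $\alpha=1/q+1/r-1/p$ is symmetric in $q$ and $r$, so interchanging $q\leftrightarrow r$ in \eqref{eq:dK} and \eqref{eq:dE} yields formulae for the derivatives of $\ell\mapsto K_{p,r,q}(\ell)$ and $\ell\mapsto E_{p,r,q}(\ell)$. I would then set $\ell=k'=(1-k^q)^{1/r}$, observe that $\ell^r=1-k^q=(k')^r$, that the $(r,q)$-conjugate of $\ell$ is $k$ itself (so $K_{p,r,q}(\ell)=K'_{p,r,q}(k)$ and $E_{p,r,q}(\ell)=E'_{p,r,q}(k)$), and that $d\ell/dk=-\frac{q}{r}k^{q-1}(k')^{1-r}$; feeding these into the chain rule and cancelling common factors produces \eqref{eq:dK'} and \eqref{eq:dE'}.

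The main obstacle is the bookkeeping in the middle step: forming the total-derivative identity correctly and carrying out the linear algebra that isolates $C$ and then collapses the coefficients via $\alpha q=1+q/p^*-q/r^*$; everything else is routine. An alternative route to \eqref{eq:dK} and \eqref{eq:dE} would be to differentiate the hypergeometric representations of Theorem \ref{thm:choukika} via $\frac{d}{dz}F(a,b;c;z)=\frac{ab}{c}F(a+1,b+1;c+1;z)$ together with a contiguous relation expressing $F(a+1,b+1;c+1;z)$ through $F(a,b;c;z)$ and a neighbour, but the integral computation above is more self-contained.
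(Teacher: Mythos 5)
Your proposal is correct, and it differs from the paper mainly in how much it proves: the paper disposes of \eqref{eq:dK} and \eqref{eq:dE} by citing \cite[Proposition 2]{Takeuchi2016b} (with a remark that the normalization of $K_{p,q,r}$ there is slightly different), and then obtains \eqref{eq:dK'} and \eqref{eq:dE'} exactly as you do, by the chain rule with $d(k')/dk=-qk^{q-1}/(r(k')^{r-1})$ and the $q\leftrightarrow r$ symmetry of $\alpha$. Your treatment of the first two formulae is therefore a self-contained replacement for the citation, and it does close: differentiating \eqref{eq:epqr} under the integral sign and writing $k^qt^q=1-(1-k^qt^q)$ gives \eqref{eq:dE} at once, and for \eqref{eq:dK} the total-derivative identity $\int_0^1\frac{d}{dt}\bigl[t(1-t^q)^{1/p^*}(1-k^qt^q)^{-1/r^*}\bigr]dt=0$ works as you predict — after reducing the $t^{2q}$ term via $k^qt^{2q}=t^q-t^q(1-k^qt^q)$ and using $\int_0^1(1-t^q)^{-1/p}(1-k^qt^q)^{1/r-1}t^q\,dt=k^{-q}(K_{p,q,r}(k)-E_{p,q,r}(k))$, the coefficients collapse through $1+q/p^*-q/r^*=\alpha q$ and yield precisely $\frac{q}{r^*}(k')^rD=K_{p,q,r}(k)-\alpha q\,k^{-q}(K_{p,q,r}(k)-E_{p,q,r}(k))$, hence \eqref{eq:dK}; the justification of differentiation under the integral sign (uniform integrable bound $C(1-t^q)^{-1/p}$ for $k$ in compacta of $(0,1)$) is also adequate. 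What your route buys is independence from the external reference and from its differing definition of $K_{p,q,r}$; what the paper's route buys is brevity. The only cosmetic point is that your "linear relation among $K$, $E$ and $C$" is more naturally stated in terms of $D$ (equivalently $C$, since $k^qD=C-K_{p,q,r}(k)$), but that is bookkeeping, not a gap.
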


\begin{proof}
See \cite[Proposition 2]{Takeuchi2016b} for \eqref{eq:dK} and \eqref{eq:dE}. 
Note, however, that the definition of $K_{p,q,r}$ in \cite{Takeuchi2016b} is slightly different
from that of ours, but the same
formula holds true.
Eqs. \eqref{eq:dK'} and \eqref{eq:dE'} are obtained from the derivative of the composite function with $d(k')/dk=-qk^{q-1}/(r(k')^{r-1})$.
\end{proof}

Let $y_1,\ y_2$ be the functions: for $k \in (0,1)$
\begin{align*}
& y_1(k):=K_{p,q,r}(k),\\
& y_2(k):=k^{q/p-1}(k')^{1-r/p}K_{p,r,q}'(k).
\end{align*}

\begin{lem}
Functions $y_1,\ y_2$ are solutions of
\begin{equation}
\label{eq:y1y2sol}
k(k')^r\frac{d^2y}{dk^2}+\left(2-\frac{q}{p}-\left(2+\frac{q}{r^*}\right)k^q\right)\frac{dy}{dk}-\frac{q}{r^*}k^{q-1}y=0.
\end{equation}
\end{lem}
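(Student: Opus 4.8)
The plan is to verify directly that each of $y_1$ and $y_2$ satisfies the second-order linear ODE \eqref{eq:y1y2sol}, using the first-order system for the GCEIs recorded in Lemma \ref{lem:diff}. The strategy for $y_1=K_{p,q,r}(k)$ is the standard reduction-of-order computation: differentiate \eqref{eq:dK} once more, then eliminate all occurrences of $E_{p,q,r}(k)$ and $dE_{p,q,r}(k)/dk$ using \eqref{eq:dK} and \eqref{eq:dE}, until only $K_{p,q,r}(k)$ and its derivatives remain. Concretely, from \eqref{eq:dK} one solves for $E_{p,q,r}(k)$ in terms of $y_1$ and $y_1'$; substituting this and \eqref{eq:dE} into the derivative of \eqref{eq:dK} yields a relation among $y_1''$, $y_1'$, $y_1$, which after clearing denominators (the factors $k$, $k'$, and powers thereof, together with $d(k')/dk=-qk^{q-1}/(r(k')^{r-1})$) should collapse to exactly \eqref{eq:y1y2sol}. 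This is bookkeeping-heavy but entirely mechanical; the only care needed is tracking the constant $\alpha=1/q+1/r-1/p$ and the exponent relations $1/p^*=1-1/p$, $1/r^*=1-1/r$.

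For $y_2=k^{q/p-1}(k')^{1-r/p}K_{p,r,q}'(k)$ the computation is longer because of the prefactor, so I would organize it as follows. Write $y_2=m(k)\,u(k)$ with $m(k):=k^{q/p-1}(k')^{1-r/p}$ and $u(k):=K_{p,r,q}'(k)$, compute $y_2'=m'u+mu'$ and $y_2''=m''u+2m'u'+mu''$, and feed these into \eqref{eq:y1y2sol}. One then replaces $u'$ by \eqref{eq:dK'}, uses \eqref{eq:dE'} to handle the resulting $E_{p,r,q}'(k)$-terms after a further differentiation (alternatively, solve \eqref{eq:dK'} for $E_{p,r,q}'(k)$ in terms of $u$ and $u'$, exactly as in the $y_1$ case, and use that the second-order ODE satisfied by $u=K_{p,r,q}'(k)$ is the one obtained from \eqref{eq:y1y2sol} by the substitution $q\leftrightarrow r$ combined with a change of variable). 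The cleanest route is probably: first establish that $K_{p,r,q}'(k)$, as a function of $k$, satisfies its own second-order linear ODE (obtained by the same elimination from \eqref{eq:dK'}–\eqref{eq:dE'}), and then check that multiplying a solution of that ODE by $m(k)$ produces a solution of \eqref{eq:y1y2sol} — i.e., that the two ODEs are conjugate under the gauge transformation $y\mapsto m(k)y$. Verifying this conjugacy amounts to matching the coefficient of $y'$ (which forces $m'/m$ to equal a prescribed rational function, and one checks $m'/m=(q/p-1)/k-(1-r/p)qk^{q-1}/(r(k')^r)$ does the job) and then the coefficient of $y$.

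I expect the main obstacle to be purely the volume and delicacy of the algebra for $y_2$: keeping the fractional exponents of $k$ and $k'$ straight, correctly differentiating $(k')^{1-r/p}$ via the chain rule with $d(k')/dk=-qk^{q-1}/(r(k')^{r-1})$, and confirming that every $E'$-contribution cancels. There is no conceptual difficulty — both claims are "a solution of a first-order linear system is, after an explicit gauge, annihilated by an explicit second-order operator" — but a sign error or a misplaced exponent is easy to make, so I would double-check by specializing to $p=q=r=2$, where \eqref{eq:y1y2sol} must reduce to the classical Legendre ODE $k(1-k^2)y''+(1-3k^2)y'-ky=0$ with the known solutions $K(k)$ and $K'(k)$ (note $m(k)\equiv1$ there), and also by checking the $k\to0$ indicial behaviour, where one solution is analytic and the other carries the factor $k^{q/p-1}(k')^{1-r/p}$, consistent with the indicial roots of \eqref{eq:y1y2sol} at $k=0$.
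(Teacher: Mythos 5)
Your route is sound but genuinely different from the paper's. The paper does not touch the first-order system of Lemma \ref{lem:diff} at this point: it invokes the hypergeometric representations of Theorem \ref{thm:choukika}, namely $y_1=\frac{\pi_{p,q}}{2}F(1/q,1/r^*;1/p^*+1/q;k^q)$ and $K_{p,r,q}'(k)=\frac{\pi_{p,r}}{2}F(1/r,1/q^*;1/p^*+1/r;1-k^q)$, transplants the standard hypergeometric differential equation through the substitution $x=k^q$, and for $y_2$ performs exactly the gauge step you describe (dividing by $\phi(k)=k^{1-q/p}(k')^{r/p-1}$ and rearranging into a quasi-self-adjoint form). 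What the paper's route buys is uniformity and brevity: the hypergeometric ODE holds for all parameter values, so no case distinction is needed, and only first derivatives of the prefactor ever appear. What your route buys is self-containedness: it needs nothing beyond Lemma \ref{lem:diff}, at the price of heavier second-derivative bookkeeping. I checked that your elimination for $y_1$ does reproduce \eqref{eq:y1y2sol} exactly (the coefficient $1+\alpha q-q/r$ collapses to $2-q/p$, and $q(1-1/r)=q/r^*$), and your $p=q=r=2$ and indicial sanity checks are correct. The one genuine caveat: your first step "solve \eqref{eq:dK} for $E_{p,q,r}$" divides by $\alpha q$, and likewise solving \eqref{eq:dK'} for $E_{p,r,q}'$ divides by $\alpha r$; this breaks down when $\alpha=1/q+1/r-1/p=0$, a case the paper explicitly allows (it is precisely the degenerate case in Theorem \ref{thm:3para}). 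The fix is easy and should be stated: when $\alpha=0$, \eqref{eq:dK} is already a first-order equation $k(k')^rK'=k^qK$ in $K$ alone, and differentiating it and then using it once more to eliminate the leftover $(1-k^q)K'-k^{q-1}K$ term shows that $K$ still satisfies \eqref{eq:y1y2sol}; the analogous remark handles $y_2$.
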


\begin{proof}
First, let us discuss $y_1$. From Theorem \ref{thm:choukika},
$y_1$ is represented by using the hypergeometric series  $y=F(1/q,1/r^*;1/p^*+1/q;x)$, which satisfies
the hypergeometric differential equation:
\[x(1-x)\frac{d^2y}{dx^2}+\left(\frac{1}{p^*}+\frac{1}{q}
-\left(\frac{1}{q}+\frac{1}{r^*}+1\right)x\right)\frac{dy}{dx}-\frac{1}{qr^*}y=0.\]
Therefore, letting $x=k^q$ yields
\begin{equation}
\label{eq:y1sol}
k(k')^r\frac{d^2y_1}{dk^2}+\left(2-\frac{q}{p}-\left(2+\frac{q}{r^*}\right)k^q\right)\frac{dy_1}{dk}-\frac{q}{r^*}k^{q-1}y_1=0,
\end{equation}
which implies that $y_1$ satisfies \eqref{eq:y1y2sol}.

Next, consider $y_2$. Put $z:=\phi(k)y_2=K_{p,r,q}'(k)$, where
\begin{equation}
\label{eq:phi}
\phi(k):=k^{1-q/p}(k')^{r/p-1}.
\end{equation}
The hypergeometric series $y=F(1/r,1/q^*;1/p^*+1/r;1-x)$ satisfies the hypergeometric differential equation:
\[x(1-x)\frac{d^2y}{dx^2}+\left(\frac{1}{p}+\frac{1}{q^*}
-\left(\frac{1}{q^*}+\frac{1}{r}+1\right)x\right)\frac{dy}{dx}-\frac{1}{q^*r}y=0.\]
Similar to the previous calculation of $y_1$, 
\[k(k')^r\frac{d^2z}{dk^2}+\left(\frac{q}{p}-q\left(\frac{1}{r}+1\right)k^q\right)\frac{dz}{dk}-\frac{q(q-1)}{r}k^{q-1}z=0.\]
Dividing both sides by $\phi(k)$, we have
\[\frac{d}{dk}\left(k^{q/p}(k')^{r/p^*+1}\frac{dz}{dk}\right)
=\frac{q(q-1)}{r}k^{q-1}\frac{z}{\phi(k)},\]
i.e.,
\[\frac{d}{dk}\left(k(k')^r\frac{dy_2}{dk}
+\left(1-\frac{q}{p}-\left(1-\frac{q}{r}\right)k^q\right)y_2\right)
=\frac{q(q-1)}{r}k^{q-1}y_2.\]
After computing the left-hand side and then rearranging, we conclude
\begin{equation}
\label{eq:y2sol}
k(k')^r\frac{d^2y_2}{dk^2}+\left(2-\frac{q}{p}-\left(2+\frac{q}{r^*}\right)k^q\right)\frac{dy_2}{dk}-\frac{q}{r^*}k^{q-1}y_2=0,
\end{equation}
which implies that $y_2$ also satisfies \eqref{eq:y1y2sol} as well as $y_1$.
\end{proof}

Define the Wronskian of $y_1,\ y_2$ as
\[W(y_1,y_2;k):=y_1\frac{dy_2}{dk}-\frac{dy_1}{dk}y_2.\]

\begin{lem}
\label{lem:WisC}
There exists a constant $C$ such that for any $k \in (0,1)$,
\begin{equation}
\label{eq:WisC}
W(y_1,y_2;k)=\frac{C}{k^{2-q/p}(k')^{r/p+r-1}}.
\end{equation}
\end{lem}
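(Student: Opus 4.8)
The plan is to apply Abel's identity (the Wronskian formula) to the second-order linear ODE \eqref{eq:y1y2sol}, since both $y_1$ and $y_2$ solve it on $(0,1)$. First I would rewrite \eqref{eq:y1y2sol} in the standard form
\[
\frac{d^2y}{dk^2}+P(k)\frac{dy}{dk}+Q(k)y=0,
\quad
P(k)=\frac{2-\tfrac{q}{p}-\left(2+\tfrac{q}{r^*}\right)k^q}{k(k')^r},
\]
so that Abel's theorem gives $W(y_1,y_2;k)=C\exp\!\left(-\int P(k)\,dk\right)$ for some constant $C$. The substance of the proof is therefore the antiderivative computation: I must show that
\[
\int \frac{2-\tfrac{q}{p}-\left(2+\tfrac{q}{r^*}\right)k^q}{k(1-k^q)}\,dk
=\log\!\left(k^{2-q/p}(1-k^q)^{\beta}\right)+\text{const}
\]
for the exponent $\beta$ that makes $(1-k^q)^{\beta}=(k')^{r/p+r-1}$, i.e. $\beta=\tfrac1r\left(\tfrac rp+r-1\right)=\tfrac1p+1-\tfrac1r$.

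The key steps, in order: (1) verify $(k')^r=1-k^q$ from the definition $k'=(1-k^q)^{1/r}$, so the coefficient of $y''$ is literally $k(1-k^q)$; (2) perform a partial-fraction split of the integrand of $\int P\,dk$ — writing $\frac{2-q/p-(2+q/r^*)k^q}{k(1-k^q)}=\frac{A}{k}+\frac{Bk^{q-1}}{1-k^q}$ by matching the constant and $k^q$ coefficients in the numerator, which forces $A=2-q/p$ and, using $1/p^*+1/q=1-1/p+1/q$ type bookkeeping together with $r^*=r/(r-1)$, gives $B=-q\beta$ with $\beta=\tfrac1p+1-\tfrac1r$; (3) integrate term by term to get $\int P\,dk=(2-q/p)\log k-\beta\log(1-k^q)+\text{const}$; (4) exponentiate and negate to obtain $\exp(-\int P\,dk)=\text{const}\cdot k^{-(2-q/p)}(1-k^q)^{\beta}= \text{const}\cdot k^{-(2-q/p)}(k')^{r\beta}$, and observe $r\beta=\tfrac rp+r-1$, i.e. $(k')^{r\beta}=(k')^{r/p+r-1}$; (5) conclude $W(y_1,y_2;k)=C k^{q/p-2}(k')^{r/p+r-1}$ — but this is the reciprocal of \eqref{eq:WisC}, so I would double-check the sign conventions: the Wronskian of \emph{two} solutions satisfies $W'=-P\,W$, hence $W=C e^{-\int P}$, and the claimed formula \eqref{eq:WisC} has $k^{2-q/p}(k')^{r/p+r-1}$ in the \emph{denominator}. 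Resolving this discrepancy means I actually need $\int P\,dk=\log\!\left(k^{2-q/p}(1-k^q)^{\beta}\right)$ up to sign, so that $e^{-\int P}=k^{-(2-q/p)}(k')^{-r\beta}$ — which matches \eqref{eq:WisC} precisely. I would recompute $A$ and $B$ carefully: the correct reading is $A=2-q/p$ but with the sign such that $e^{-\int P\,dk}$ lands in the denominator, which it does since $A>0$ for $q<2p$ and more importantly the algebra of step (2) should yield $-\int P\,dk=-\left(2-\tfrac qp\right)\log k-\beta\log(1-k^q)$, giving exactly $\dfrac{C}{k^{2-q/p}(k')^{r\beta}}$.

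The main obstacle, then, is purely the bookkeeping in step (2): correctly identifying the partial-fraction constants $A$ and $B$ and verifying the algebraic identity $B=-q\left(\tfrac1p+1-\tfrac1r\right)$ from the raw coefficients $2-\tfrac qp$ and $2+\tfrac{q}{r^*}$, using $\tfrac1{r^*}=1-\tfrac1r$. Concretely one needs: matching $k^q$-coefficients in $2-q/p-(2+q/r^*)k^q=A(1-k^q)+Bk^q$ gives $-A+B=-(2+q/r^*)$, so $B=A-2-q/r^*=(2-q/p)-2-q(1-1/r)=-q/p-q+q/r=-q\left(\tfrac1p+1-\tfrac1r\right)=-q\beta$, confirming $r\beta=r/p+r-1$ as required. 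Everything else — Abel's identity and exponentiation — is standard, so once the exponent bookkeeping checks out the lemma follows immediately with $C$ an undetermined constant (to be pinned down in the next result of the paper).
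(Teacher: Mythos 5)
Your argument is correct and essentially the paper's own: the paper likewise derives the first-order equation $k(k')^r\frac{dW}{dk}+\left(2-\frac{q}{p}-\left(2+\frac{q}{r^*}\right)k^q\right)W=0$ for the Wronskian (Abel's identity) and integrates it, using the integrating factor $\phi(k)=k^{1-q/p}(k')^{r/p-1}$ rather than your explicit partial-fraction antiderivative of $P$. The sign ``discrepancy'' you worry about in steps (3)--(5) never actually arises: with $B=-q\beta$ the substitution $u=1-k^q$ gives $\int P\,dk=(2-q/p)\log k+\beta\log(1-k^q)$ directly, so $e^{-\int P\,dk}$ places both factors in the denominator exactly as in \eqref{eq:WisC}.
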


\begin{proof}
Subtracting \eqref{eq:y1sol} multiplied by $y_2$ from \eqref{eq:y2sol} multiplied by $y_1$ yields
\[k(k')^r\frac{dW}{dk}+\left(2-\frac{q}{p}-\left(2+\frac{q}{r^*}\right)k^q\right)W=0.\]
Multiplying both sides by $\phi(k)$ leads to
\[\frac{d}{dk}\left(k^{2-q/p}(k')^{r/p+r-1}W\right)=0.\]
Hence, $k^{2-q/p}(k')^{r/p+r-1}W \equiv C$ for some 
constant $C$.
\end{proof}

Now, we define
\[L(k):=E_{p,q,r}(k)K_{p,r,q}'(k)
+K_{p,q,r}(k)E_{p,r,q}'(k)
-K_{p,q,r}(k)K_{p,r,q}'(k).
\]
\begin{lem}
\label{lem:WisL}
For any $k \in (0,1)$,
\begin{equation}
\label{eq:WisL}
W(y_1,y_2;k)=-\frac{\alpha qL(k)}{k^{2-q/p}(k')^{r/p+r-1}}.
\end{equation}
\end{lem}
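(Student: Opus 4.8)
The plan is to compute the Wronskian $W(y_1,y_2;k)=y_1y_2'-y_1'y_2$ directly in terms of the four integrals $K_{p,q,r}(k),E_{p,q,r}(k),K_{p,r,q}'(k),E_{p,r,q}'(k)$ by means of Lemma~\ref{lem:diff}. First I would write $y_2=\phi(k)^{-1}K_{p,r,q}'(k)$ with $\phi$ as in \eqref{eq:phi}, so that $\phi(k)^{-1}=k^{q/p-1}(k')^{1-r/p}$, and expand by the product rule:
\[
W(y_1,y_2;k)=\phi(k)^{-1}\Bigl(y_1\tfrac{dK_{p,r,q}'}{dk}-y_1'\,K_{p,r,q}'\Bigr)+\bigl(\phi(k)^{-1}\bigr)'\,y_1\,K_{p,r,q}'.
\]
Then I would substitute \eqref{eq:dK} for $y_1'=dK_{p,q,r}/dk$ and \eqref{eq:dK'} for $dK_{p,r,q}'/dk$, and compute $\bigl(\phi(k)^{-1}\bigr)'$ using $d(k')/dk=-qk^{q-1}/(r(k')^{r-1})$; each of these three contributions carries the common factor $\phi(k)^{-1}/(k(k')^r)$.

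After the substitutions, what remains inside the brackets is a linear combination of the three products $K_{p,q,r}K_{p,r,q}'$, $K_{p,q,r}E_{p,r,q}'$ and $E_{p,q,r}K_{p,r,q}'$; no $E_{p,q,r}E_{p,r,q}'$ term can occur, since in \eqref{eq:dK} the integral $E_{p,q,r}$ appears only multiplied by $K_{p,r,q}'$, while in \eqref{eq:dK'} the integral $E_{p,r,q}'$ appears only multiplied by $y_1=K_{p,q,r}$. Reading off the coefficients, those of $K_{p,q,r}E_{p,r,q}'$ and of $E_{p,q,r}K_{p,r,q}'$ turn out to be both equal to $-\alpha q$, which already agrees with the corresponding coefficients in $-\alpha qL(k)$, since $L(k)=E_{p,q,r}K_{p,r,q}'+K_{p,q,r}E_{p,r,q}'-K_{p,q,r}K_{p,r,q}'$.

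The one genuine computation — and the step I expect to be the main obstacle — is to verify that the coefficient of $K_{p,q,r}K_{p,r,q}'$ collapses to $\alpha q$ as well. This coefficient is a sum of three rational expressions in $k^q$ and $(k')^r$ arising from \eqref{eq:dK}, \eqref{eq:dK'} and $\bigl(\phi(k)^{-1}\bigr)'$; grouping the terms and using the identity $\alpha q=1+q/r-q/p$, one checks that the coefficient of $(k')^r$ and the coefficient of $k^q$ are each equal to $-\alpha q$, so that, by $k^q+(k')^r=1$, the whole sum equals $2\alpha q-\alpha q\bigl(k^q+(k')^r\bigr)=\alpha q$. It then follows that
\[
W(y_1,y_2;k)=\frac{\alpha q\,\phi(k)^{-1}}{k(k')^r}\bigl(K_{p,q,r}K_{p,r,q}'-K_{p,q,r}E_{p,r,q}'-E_{p,q,r}K_{p,r,q}'\bigr)=-\frac{\alpha q\,\phi(k)^{-1}}{k(k')^r}\,L(k),
\]
and since $\phi(k)^{-1}/(k(k')^r)=k^{q/p-2}(k')^{1-r/p-r}=1/\bigl(k^{2-q/p}(k')^{r/p+r-1}\bigr)$, this is exactly \eqref{eq:WisL}.
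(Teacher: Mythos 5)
Your proposal is correct and follows essentially the same route as the paper: both compute $W(y_1,y_2;k)$ directly from \eqref{eq:dK} and \eqref{eq:dK'} together with the explicit form of $\phi(k)$ and $d(k')/dk$, the only difference being organizational — the paper first simplifies $k(k')^r\,dy_2/dk$ to $(k^qK_{p,r,q}'(k)-\alpha q E_{p,r,q}'(k))/\phi(k)$ and then forms the Wronskian, whereas you expand the Wronskian and collect the coefficients of $K_{p,q,r}K_{p,r,q}'$, $K_{p,q,r}E_{p,r,q}'$, $E_{p,q,r}K_{p,r,q}'$ (and your key check, that the $K_{p,q,r}K_{p,r,q}'$ coefficient collapses to $\alpha q$ via $k^q+(k')^r=1$ and $\alpha q=1+q/r-q/p$, is exactly right).
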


\begin{proof}
From \eqref{eq:dK} in Lemma \ref{lem:diff},
\begin{equation}
\label{eq:diffy1}
k(k')^r\frac{dy_1}{dk}=-(\alpha q-k^q)K_{p,q,r}(k)+\alpha qE_{p,q,r}(k).
\end{equation}

On the other hand, 
differentiating both sides of $\phi (k)y_2=K_{p,r,q}'(k)$, where $\phi(k)$ is defined as \eqref{eq:phi},
and multiplying them by $k(k')^r$ give
\begin{multline*}
\left(1-\frac{q}{p}-\left(1-\frac{q}{r}\right)k^q\right)K_{p,r,q}'(k)
+k(k')^r\phi(k)\frac{dy_2}{dk}\\
=\frac{q}{r}((\alpha r-(k')^r)K_{p,r,q}'(k)-\alpha rE_{p,r,q}'(k)).
\end{multline*}
Here, \eqref{eq:dK'} in Lemma \ref{lem:diff} 
was used in the calculation of the right-hand side.
Thus,
\begin{equation}
\label{eq:diffy2}
k(k')^r\frac{dy_2}{dk}
=\frac{k^qK_{p,r,q}'(k)-\alpha qE_{p,r,q}'(k)}{\phi(k)}.
\end{equation}
Subtracting \eqref{eq:diffy1} multiplied by $y_2$ from \eqref{eq:diffy2} multiplied by $y_1$ yields
\[W=-\frac{\alpha qL(k)}{k(k')^r\phi(k)}.\]
This proof is complete.
\end{proof}

\begin{thm}[Three parameters]
\label{thm:3para}
Let $p, q, r>1$.
Then,  for any $k \in (0,1)$, $L(k)$ is non-zero constant, precisely,
\begin{multline}
\label{eq:LR3para}
E_{p,q,r}(k)K_{p,r,q}'(k)
+K_{p,q,r}(k)E_{p,r,q}'(k)
-K_{p,q,r}(k)K_{p,r,q}'(k)\\
=\frac{\pi_{p,q}}{2r}B\left(\frac{1}{p^*}+\frac{1}{q},\frac{1}{r}\right).
\end{multline}
Hence, it follows from Lemma \ref{lem:WisL} that $y_1$ and $y_2$ are linearly dependent if and only if $\alpha = 0$, i.e., $1/q+1/r=1/p$.
\end{thm}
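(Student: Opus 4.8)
The skeleton is already in place: by Lemma~\ref{lem:WisC}, $W(y_1,y_2;k) = C\,k^{-(2-q/p)}(k')^{-(r/p+r-1)}$ for some constant $C$, while Lemma~\ref{lem:WisL} gives $W(y_1,y_2;k) = -\alpha q L(k)\,k^{-(2-q/p)}(k')^{-(r/p+r-1)}$. Comparing the two yields $\alpha q L(k) = -C$, so $L(k)$ is a constant on $(0,1)$; the only work left is to (a) identify that constant, and (b) argue it is non-zero, from which the linear-dependence dichotomy follows via Lemma~\ref{lem:WisL}. So the proof is essentially a single limit computation.

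\emph{Identifying the constant.} I would evaluate $L(k)$ by letting $k \to 0+$. As $k \to 0$, one has $k' = (1-k^q)^{1/r} \to 1$, so $K_{p,r,q}'(k) \to K_{p,r,q}(1^-_{r,q}) $... more carefully, $k'_{r,q} \to 1$, so I need the behaviour of $K_{p,r,q}$ and $E_{p,r,q}$ near modulus $1$, and simultaneously the behaviour of $K_{p,q,r}(k)$ and $E_{p,q,r}(k)$ near $0$. The latter is easy: $K_{p,q,r}(0) = E_{p,q,r}(0) = \pi_{p,q}/2$ directly from the defining integrals (or from Theorem~\ref{thm:choukika} with $k=0$, since $F(\cdot,\cdot;\cdot;0)=1$). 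For the former, note $E_{p,r,q}'(k) = E_{p,r,q}(k'_{r,q})$ stays bounded (its integrand is bounded), and in fact $E_{p,r,q}(k'_{r,q}) \to E_{p,r,q}(1) = \int_0^{\pi_{p,r}/2}(1-\sin_{p,r}^r\theta)^{1/q}\,d\theta$, which by Corollary~\ref{cor:sinacosb} (or Theorem~\ref{thm:wallisI}, third formula with the $\dn$-integral at $k \to 1$, equivalently the $\cos_{p,r}$ integral) equals $\frac{1}{r}B(1/r, q/r^* \cdot \text{(appropriate)})$ — I should just compute it as $\int_0^{\pi_{p,r}/2}\cos_{p,r}^{rp/q}\theta\,d\theta$ using $\cos_{p,r}^r = (1-\sin_{p,r}^r)^{r/p}$... wait, $\cos_{p,r}^p = 1-\sin_{p,r}^r$, so $(1-\sin_{p,r}^r\theta)^{1/q} = \cos_{p,r}^{p/q}\theta$, giving $E_{p,r,q}(1) = \int_0^{\pi_{p,r}/2}\cos_{p,r}^{p/q}\theta\,d\theta = \frac{1}{r}B(1/r, 1/(q q'^*)+\dots)$ via Corollary~\ref{cor:sinacosb} with $a=0$, $b=p/q$. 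Meanwhile $K_{p,r,q}'(k) = K_{p,r,q}(k'_{r,q})$ \emph{diverges} as $k'_{r,q} \to 1$ (logarithmically or as a negative power, depending on $1/q - 1/p$); but it is multiplied in $L$ by $(E_{p,q,r}(k) - K_{p,q,r}(k))$, which $\to 0$. So the two troublesome terms in $L$ are $K_{p,r,q}'(k)\,(E_{p,q,r}(k) - K_{p,q,r}(k))$ (indeterminate $\infty \cdot 0$) plus the clean term $K_{p,q,r}(k)\,E_{p,r,q}'(k) \to \frac{\pi_{p,q}}{2}E_{p,r,q}(1)$.

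\emph{Handling the indeterminate term.} Rather than wrestling with asymptotics of $K_{p,r,q}'$ directly, the cleaner route is to use the already-established fact that $L$ is constant and instead take the limit $k \to 1-$ (where $k'_{r,q} \to 0$), by symmetry of roles: as $k\to 1$, $K'_{p,r,q}(k) = K_{p,r,q}(k'_{r,q}) \to \pi_{p,r}/2$ and $E'_{p,r,q}(k) \to \pi_{p,r}/2$, both finite, while now $K_{p,q,r}(k)$ and $E_{p,q,r}(k)$ behave near modulus $1$ — here $E_{p,q,r}(1) = \int_0^{\pi_{p,q}/2}\cos_{p,q}^{p/r}\theta\,d\theta$ is finite (Corollary~\ref{cor:sinacosb}), and $K_{p,q,r}(k) \to \infty$ but is multiplied by $E'_{p,r,q}(k) - K'_{p,r,q}(k) \to 0$. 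So $L(k) \to \frac{\pi_{p,r}}{2}E_{p,q,r}(1) + \lim_{k\to 1}K_{p,q,r}(k)(E'_{p,r,q}(k)-K'_{p,r,q}(k))$, and \textbf{the same indeterminacy reappears} — the symmetry $q \leftrightarrow r$ is exact here, so neither limit is obviously easier. The honest fix: compute the indeterminate product using Lemma~\ref{lem:diff}. Near $k=0$, from \eqref{eq:dE} and \eqref{eq:dK}, $E_{p,q,r}(k) - K_{p,q,r}(k) = -\frac{rk}{q}\frac{dE_{p,q,r}}{dk}$, and differentiating the series $E_{p,q,r}(k)=\frac{\pi_{p,q}}{2}F(1/q,-1/r;1/p^*+1/q;k^q)$ from Theorem~\ref{thm:choukika} shows $\frac{dE_{p,q,r}}{dk} \sim -\frac{\pi_{p,q}}{2}\cdot\frac{(1/q)(1/r)}{1/p^*+1/q}\,q\,k^{q-1}$ as $k\to 0$, so $E_{p,q,r}(k)-K_{p,q,r}(k) \sim \frac{\pi_{p,q}}{2}\cdot\frac{1/q}{1/p^*+1/q}\cdot\frac{k^q}{ } \cdot(\text{const})$ — order $k^q$. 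Then $K'_{p,r,q}(k) = K_{p,r,q}((1-k^q)^{1/r})$ near $k=0$: substituting $m = (1-k^q)^{1/r}$, $1-m^r = k^q$, and using the standard modulus-$1$ asymptotic of $K_{p,r,q}$, which from Theorem~\ref{thm:choukika} and the hypergeometric connection formula behaves like $\sim \text{const}\cdot(1-m^r)^{\,\text{exponent}}$ or $\sim \text{const}\cdot\log\frac{1}{1-m^r}$; in either sub-case the product with the $O(k^q)$ factor vanishes as $k\to 0$ \emph{provided} the exponent condition holds, and otherwise contributes a finite amount. I expect, after the dust settles, the product term $\to 0$ and the Beta-function normalization of $E_{p,q,r}(1)$ (resp. $E_{p,r,q}(1)$) collapses — using $B(x,y)=\Gamma(x)\Gamma(y)/\Gamma(x+y)$ and the identity $\pi_{p,q}/2 = \frac{1}{q}B(1/p^*,1/q)$ — to exactly $\frac{\pi_{p,q}}{2r}B(1/p^*+1/q,1/r)$. \textbf{This asymptotic bookkeeping of $K'_{p,r,q}$ near modulus $1$ is the main obstacle}; everything else is the substitution $x=k^q$ and Gamma-function algebra. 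Once $L \equiv \frac{\pi_{p,q}}{2r}B(1/p^*+1/q,1/r) > 0$, Lemma~\ref{lem:WisL} gives $W(y_1,y_2;k) \ne 0$ for all $k$ iff $\alpha q \ne 0$ iff $\alpha \ne 0$, and $W \equiv 0$ iff $\alpha = 0$ iff $1/q+1/r = 1/p$; since $W \equiv 0$ is equivalent to $y_1, y_2$ being linearly dependent (they solve the second-order linear ODE \eqref{eq:y1y2sol}), this is exactly the claimed dichotomy.
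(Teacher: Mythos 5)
There is a genuine gap at the very first step. From Lemmas \ref{lem:WisC} and \ref{lem:WisL} you can only conclude that $\alpha q L(k)=-C$, i.e.\ that $\alpha L(k)$ is constant. When $\alpha\neq 0$ this indeed forces $L$ to be constant, but when $\alpha=0$ (that is, $1/q+1/r=1/p$) the identity degenerates to $0=-C$ and gives no information whatsoever about $L$; your assertion ``so $L(k)$ is a constant on $(0,1)$'' is unjustified exactly in the case that is most delicate for the final dichotomy, since that is the case in which $y_1,y_2$ are linearly dependent. The paper treats $\alpha=0$ separately: it solves \eqref{eq:dK} and \eqref{eq:dK'} in closed form, obtaining $K_{p,q,r}(k)=(\pi_{p,q}/2)(k')^{-r/q}$ and $K_{p,r,q}'(k)=(\pi_{p,r}/2)k^{-q/r}$, substitutes these into $L$, and checks $dL/dk=0$ directly using \eqref{eq:dE} and \eqref{eq:dE'}. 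Without some argument of this kind your proof does not establish \eqref{eq:LR3para} for all $p,q,r>1$, and in particular does not establish the ``only if'' half of the linear-dependence statement in any rigorous way.

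The second weak point is the evaluation of the constant. You correctly isolate the indeterminate product $(E_{p,q,r}(k)-K_{p,q,r}(k))K_{p,r,q}'(k)$ as $k\to 0^+$, but you leave it as an expectation (``I expect, after the dust settles, the product term $\to 0$''), hedged by an unproved ``exponent condition''. Your sketched asymptotics do in fact close: $E_{p,q,r}(k)-K_{p,q,r}(k)=O(k^q)$ by Theorem \ref{thm:choukika}, while $K_{p,r,q}'(k)$ blows up at worst like $k^{q(1/q-1/p)}=k^{1-q/p}$ (or logarithmically), so the product is $O(k^{1+q/p^*})\to 0$ in every subcase; alternatively, the paper avoids the hypergeometric connection formula entirely by quoting the elementary bound $|(E_{p,q,r}(k)-K_{p,q,r}(k))K_{p,r,q}'(k)|\le \frac{\pi_{p,r}}{2}\,k\,K_{p,q,r}(k)$ from \cite{Takeuchi2016b}, which kills the term at once. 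Your identification of the surviving term, $\lim_{k\to 0^+}K_{p,q,r}(k)E_{p,r,q}'(k)=\frac{\pi_{p,q}}{2}E_{p,r,q}(1)=\frac{\pi_{p,q}}{2r}B\bigl(\frac{1}{p^*}+\frac{1}{q},\frac{1}{r}\bigr)$ via Corollary \ref{cor:sinacosb}, matches the paper, so once the vanishing of the cross term is actually proved and the $\alpha=0$ case is supplied, the argument coincides with the paper's.
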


\begin{proof}
Combining Lemmas  \ref{lem:WisC} and \ref{lem:WisL}, 
we see that $\alpha L(k)$ is constant.

First, we consider the case $\alpha \neq 0$. In this case, 
$L(k)$ is a constant $C$.  In the same way as \cite[Theorem 1]{Takeuchi2016b},
we can show 
\[|(E_{p,q,r}(k)-K_{p,q,r}(k))K_{p,r,q}'(k)|
\le \frac{\pi_{p,r}}{2}kK_{p,q,r}(k),\]
hence $(E_{p,q,r}(k)-K_{p,q,r}(k))K_{p,r,q}'(k) \to 0$ as $k \to +0$. 
Moreover, by Corollary \ref{cor:sinacosb},
\[C=\lim_{k \to +0}L(k)=\lim_{k \to +0}K_{p,q,r}(k)E_{p,r,q}'(k)=\frac{\pi_{p,q}}{2}E_{p,r,q}(1)
=\frac{\pi_{p,q}}{2r}B\left(\frac{1}{p^*}+\frac{1}{q},\frac{1}{r}\right).\]

Next, we consider the case $\alpha=0$.
Then, solving \eqref{eq:dK} and \eqref{eq:dK'} in Lemma \ref{lem:diff}, we obtain $K_{p,q,r}(k)=(\pi_{p,q}/2)(k')^{-r/q}$ and  $K_{p,r,q}'(k)=(\pi_{p,r}/2)k^{-q/r}$, respectively. Therefore,
\[L(k)=\frac{\pi_{p,r}}{2}k^{-q/r}E_{p,q,r}(k)
+\frac{\pi_{p,q}}{2}(k')^{-r/q}E_{p,r,q}'(k)
-\frac{\pi_{p,q}\pi_{p,r}}{4}
k^{-q/r}(k')^{-r/q}.\]
Here, \eqref{eq:dE} in Lemma \ref{lem:diff} again gives
\begin{gather*}
\frac{dE_{p,q,r}(k)}{dk}
=\frac{q}{rk}E_{p,q,r}(k)-\frac{q\pi_{p,q}}{2rk}(k')^{-r/q};
\end{gather*}
hence, 
\[\frac{d}{dk}(k^{-q/r}E_{p,q,r}(k))=-\frac{q\pi_{p,q}}{2r}k^{-q/r-1}(k')^{-r/q}.\]
Similarly, from \eqref{eq:dE'}, we have
\[\frac{d}{dk}((k')^{-r/q}E_{p,r,q}'(k))=\frac{\pi_{p,r}}{2}k^{q/r^*-1}(k')^{-r/q-r}.\]
Thus,
\begin{align*}
\frac{dL(k)}{dk}
&=\frac{\pi_{p,q}\pi_{p,r}}{4}
\left(-\frac{q}{r}k^{-q/r-1}(k')^{-r/q}
+k^{q/r^*-1}(k')^{-r/q-r} \right.\\
& \hspace{3cm}\left. +\frac{q}{r}k^{-q/r-1}(k')^{-r/q}
-k^{q/r^*-1}(k')^{-r/q-r}\right)=0.
\end{align*}
This yields that $L(k)$ is a constant,
which is obtained in the same way as for $\alpha \neq 0$.

Finally, since $L(k)$ is non-zero constant, it follows from Lemma \ref{lem:WisL} that $y_1$ and $y_2$ are linearly dependent if and only if $\alpha = 0$, i.e., $1/q+1/r=1/p$.
\end{proof}

\begin{rem}
If we only need to prove this identity, we can prove it without making a case for $\alpha$ by directly differentiating the defining formula of $L(k)$ and applying Lemma \ref{lem:diff} (see \cite[Theorem 1]{Takeuchi2016b}). On the other hand, the above proof is superior in that it implies that $L(k)$ is a Wronskian of two functions, as stated in Lemma \ref{lem:WisL}.
\end{rem}

Define $K_{p,q}'(k):=K_{p,q}(k_{q,q}'),\ E_{p,q}'(k):=E_{p,q}(k_{q,q}')$, where $k_{q,q}'=(1-k^q)^{1/q}$.

\begin{cor}[Two parameters]
Let $p, q>1$.
Then, for any $k \in (0,1)$,
\begin{multline}
E_{p,q}(k)K_{p,q}'(k)
+K_{p,q}(k)E_{p,q}'(k)
-K_{p,q}(k)K_{p,q}'(k)
=\frac{\pi_{p,q}}{2q}B\left(\frac{1}{p^*}+\frac{1}{q},\frac{1}{q}\right).
\end{multline}
\end{cor}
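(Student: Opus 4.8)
The plan is to derive the two-parameter corollary as an immediate specialization of Theorem \ref{thm:3para}, so no new analysis is required. First I would set $r=q$ in the identity \eqref{eq:LR3para}. On the left-hand side, with $r=q$ the primed quantities become $K_{p,r,q}'(k)=K_{p,q,q}(k_{q,q}')=K_{p,q}'(k)$ and $E_{p,r,q}'(k)=E_{p,q,q}(k_{q,q}')=E_{p,q}'(k)$, using the abbreviations $K_{p,q}=K_{p,q,q}$ and $E_{p,q}=E_{p,q,q}$ introduced in Section \ref{sec:preparation} and the definitions $K_{p,q}'(k):=K_{p,q}(k_{q,q}')$, $E_{p,q}'(k):=E_{p,q}(k_{q,q}')$ recalled just before the corollary; also $K_{p,q,r}(k)=K_{p,q}(k)$ and $E_{p,q,r}(k)=E_{p,q}(k)$ when $r=q$. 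Hence the left-hand side of \eqref{eq:LR3para} becomes exactly $E_{p,q}(k)K_{p,q}'(k)+K_{p,q}(k)E_{p,q}'(k)-K_{p,q}(k)K_{p,q}'(k)$, which is the left-hand side of the corollary.

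Next I would evaluate the right-hand side of \eqref{eq:LR3para} at $r=q$: it is $\dfrac{\pi_{p,q}}{2r}B\!\left(\dfrac{1}{p^*}+\dfrac{1}{q},\dfrac{1}{r}\right)$, which for $r=q$ reads $\dfrac{\pi_{p,q}}{2q}B\!\left(\dfrac{1}{p^*}+\dfrac{1}{q},\dfrac{1}{q}\right)$, matching the stated right-hand side verbatim. One small point to check is that the hypothesis of Theorem \ref{thm:3para} — namely $p,q,r>1$ — is satisfied, which it is since $p,q>1$ gives $p,q,q>1$; and the conclusion holds for all $k\in(0,1)$, which is precisely the range claimed in the corollary. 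So the entire argument is: specialize $r=q$, reconcile notation, done.

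There is essentially no obstacle here; the only thing to be careful about is the bookkeeping of the primed/starred notation and making sure the abbreviation conventions $K_{p,q}(k):=K_{p,q,q}(k)$, $k_{q,q}'=(1-k^q)^{1/q}$ line up with the definitions used in Theorem \ref{thm:3para} where $k':=k_{r,q}'=(1-k^q)^{1/r}$ becomes $(1-k^q)^{1/q}$ exactly when $r=q$. I would state the proof in one or two sentences.

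\begin{proof}
Set $r=q$ in Theorem \ref{thm:3para}. Since $p,q>1$, the hypothesis $p,q,r>1$ holds. With $r=q$ we have $K_{p,q,r}(k)=K_{p,q}(k)$, $E_{p,q,r}(k)=E_{p,q}(k)$, and, because $k_{r,q}'=(1-k^q)^{1/r}=(1-k^q)^{1/q}=k_{q,q}'$, also $K_{p,r,q}'(k)=K_{p,q}(k_{q,q}')=K_{p,q}'(k)$ and $E_{p,r,q}'(k)=E_{p,q}(k_{q,q}')=E_{p,q}'(k)$. Substituting these into \eqref{eq:LR3para} and putting $r=q$ in the right-hand side yields, for every $k\in(0,1)$,
\[
E_{p,q}(k)K_{p,q}'(k)+K_{p,q}(k)E_{p,q}'(k)-K_{p,q}(k)K_{p,q}'(k)
=\frac{\pi_{p,q}}{2q}B\left(\frac{1}{p^*}+\frac{1}{q},\frac{1}{q}\right),
\]
which is the desired identity.
\end{proof}
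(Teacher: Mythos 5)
Your proof is correct and is exactly the route the paper intends: the corollary is the specialization $r=q$ of Theorem \ref{thm:3para}, with the notational reconciliation $k_{r,q}'=(1-k^q)^{1/r}=k_{q,q}'$ and $K_{p,q,q}=K_{p,q}$, $E_{p,q,q}=E_{p,q}$ handled just as you did.
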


Define $K_{p}'(k):=K_p(k_{p,p}'),\ E_{p}'(k):=E_p(k_{p,p}')$, where
$k_{p,p}'=(1-k^p)^{1/p}$.

\begin{cor}[One parameter]
Let $p>1$. Then, for any $k \in (0,1)$,
\begin{equation}
\label{eq:LR1para}
E_{p}(k)K_{p}'(k)
+K_{p}(k)E_{p}'(k)
-K_{p}(k)K_{p}'(k)
=\frac{\pi_{p}}{2}.
\end{equation}
In particular, when $p=2$, \eqref{eq:LR1para} is identical to the classical
Legendre relation \eqref{eq:CLR}.
\end{cor}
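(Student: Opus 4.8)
The plan is to derive \eqref{eq:LR1para} simply as the specialization $q=r=p$ of the three-parameter Legendre-type relation \eqref{eq:LR3para} established in Theorem \ref{thm:3para} (exactly as the two-parameter corollary is the case $q=r$ with $p$ still free). First I would check that every symbol collapses correctly when $q=r=p$. By the notational conventions of Section \ref{sec:preparation}, $K_{p,q,r}(k)=K_{p,p,p}(k)=K_p(k)$ and $E_{p,q,r}(k)=E_p(k)$. The primed modulus $k'=k_{r,q}'=(1-k^q)^{1/r}$ becomes $(1-k^p)^{1/p}=k_{p,p}'$, so $K_{p,r,q}'(k)=K_{p,p,p}(k_{p,p}')=K_p'(k)$ and likewise $E_{p,r,q}'(k)=E_p'(k)$, which are precisely the functions defined just before the corollary; and $\pi_{p,q}=\pi_{p,p}=\pi_p$. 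Hence the left-hand side of \eqref{eq:LR3para} is literally the left-hand side of \eqref{eq:LR1para}.

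Next I would evaluate the right-hand side of \eqref{eq:LR3para} under $q=r=p$. Its first beta argument is $1/p^*+1/q=1/p^*+1/p=1$, so the right-hand side equals $\dfrac{\pi_p}{2p}\,B\!\left(1,\tfrac1p\right)$. Since $B(1,1/p)=\int_0^1(1-t)^{1/p-1}\,dt=p$, this is exactly $\pi_p/2$, which proves \eqref{eq:LR1para}. For the final assertion, when $p=2$ we have $K_2=K$, $E_2=E$, $k_{2,2}'=(1-k^2)^{1/2}=k'$, hence $K_2'=K'$ and $E_2'=E'$, while $\pi_2=\pi_{2,2}=2\int_0^1(1-t^2)^{-1/2}\,dt=\pi$; substituting into \eqref{eq:LR1para} yields $E(k)K'(k)+K(k)E'(k)-K(k)K'(k)=\pi/2$, which is the classical Legendre relation \eqref{eq:CLR}.

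There is essentially no obstacle here, since the statement is a pure specialization of an already-proved identity; the only points needing (minor) care are the bookkeeping of the primed-modulus conventions (checking that $k_{r,q}'$ with $q=r=p$ is the same as $k_{p,p}'$) and the one-line evaluation $B(1,1/p)=p$. Should a self-contained argument be preferred, one could instead rerun the Wronskian computation of Lemmas \ref{lem:diff}--\ref{lem:WisL} with $q=r=p$ directly, but invoking Theorem \ref{thm:3para} is the cleanest route.
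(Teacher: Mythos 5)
Your proposal is correct and is exactly the paper's intended route: the corollary is stated as a direct specialization $q=r=p$ of Theorem \ref{thm:3para}, with the right-hand side collapsing via $\frac{\pi_p}{2p}B\left(1,\frac1p\right)=\frac{\pi_p}{2}$. Your bookkeeping of the primed-modulus conventions and the $p=2$ check agree with the paper's definitions, so nothing further is needed.
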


\begin{rem}
Using the functions
\begin{align*}
z_1(k)
&:=k^{1-q/(2p)}(k')^{(r/2)(1/p+1/r^*)}K_{p,q,r}(k),\\
z_2(k)
&:=k^{q/(2p)}(k')^{(r/2)(1/p^*+1/r)}K'_{p,r,q}(k),
\end{align*}
instead of $y_1, y_2$, we can eliminate the denominators on the right-hand side of \eqref{eq:WisC} and \eqref{eq:WisL}, respectively.
Namely, we obtain
\begin{align*}
&W(z_1,z_2;k)=C, \quad W(z_1,z_2;k)=-\alpha L(k).
\end{align*}
From these two expressions, it follows that $\alpha L(k)$ is a constant, and the proof of Theorem \ref{thm:3para} is directly applicable.
Thus, the Legendre-type relation \eqref{eq:LR3para} can be clearly derived from the fact that the Wronskian of 
$z_1$ and $z_2$ is just a constant.
In particular, if $p=q=r=2$, then $z_1(k)=\sqrt{k}k'K(k)$ and $z_2(k)=\sqrt{k}k'K'(k)$. These functions correspond to those used by Borwein and Borwein \cite[Theorem 1.6]{BB1998} in their proof of the Legendre relation.
However, we adopt here the proof using $y_1$ and $y_2$, where at least one of the functions is clearly $K_{p,q,r}(k)$.
\end{rem}

\section{Inequalities on binomial expansions}
\label{sec:edmunds}

Edmunds and Lang \cite{Edmunds2023} gave the following inequality for GTFs with one parameter.

\begin{thm}[\cite{Edmunds2023}]
\label{thm:elineq}
Let $p \geq 2$ and $x \in [0,\pi_p/2]$. 
Then,
\begin{equation*}
(\sin_px+\cos_px)^{p^*}
\leq 1+2^{2/p}\sin_px\cos_px.
\end{equation*}
This inequality is reversed if $p \in (1,2]$.
\end{thm}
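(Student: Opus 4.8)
The plan is to reduce the inequality to a single-variable claim by exploiting the identity $\cos_p^p x + \sin_p^q x = 1$ with $q=p$, i.e. $\sin_p^p x + \cos_p^p x = 1$. Writing $u := \sin_p x \in [0,1]$ and $v := \cos_p x \in [0,1]$, we have $u^p + v^p = 1$, so the two quantities are $(u+v)^{p^*}$ and $1 + 2^{2/p}uv$. Since the relation $u^p+v^p=1$ parametrizes a curve, I would substitute $v = (1-u^p)^{1/p}$ and set $f(u) := 1 + 2^{2/p} u(1-u^p)^{1/p} - (u + (1-u^p)^{1/p})^{p^*}$ on $[0,1]$, aiming to show $f \ge 0$ when $p \ge 2$ and $f \le 0$ when $p \in (1,2]$. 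The endpoints $u=0,1$ give equality ($f=0$), and the midpoint $u = 2^{-1/p}$ (where $u=v$) also gives equality since $(2u)^{p^*} = 2^{p^*}\cdot 2^{-p^*/p} = 2^{p^*(1-1/p)} = 2^{p^*/p^*}=2$ and $1 + 2^{2/p}\cdot 2^{-2/p} = 2$. So equality holds at three points, which suggests the sign of $f$ is governed by convexity/concavity considerations rather than a crude monotonicity argument.

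A cleaner route, which I would try first, is to homogenize: set $t = u/v$ (or better, introduce $s$ with $u = s^{1/p}$, $v=(1-s)^{1/p}$, $s\in[0,1]$). Then $u+v = s^{1/p} + (1-s)^{1/p}$ and $uv = (s(1-s))^{1/p}$, and the inequality becomes
\begin{equation*}
\left(s^{1/p} + (1-s)^{1/p}\right)^{p^*} \lessgtr 1 + 2^{2/p}\left(s(1-s)\right)^{1/p}.
\end{equation*}
This is symmetric under $s \mapsto 1-s$, so it suffices to analyze $s \in [0,1/2]$, with equality forced at $s=0$ and $s=1/2$. I would then introduce $g(s) := \left(s^{1/p}+(1-s)^{1/p}\right)^{p^*} - 1 - 2^{2/p}(s(1-s))^{1/p}$ and study $g'$ on $(0,1/2)$. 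Because $p^* = p/(p-1)$ and $1/p + 1/p^* = 1$, one should expect a fair amount of cancellation; the natural thing is to differentiate, factor out common positive powers of $s$ and $1-s$, and reduce the sign question to comparing $s^{-1/p^*}$-type terms — effectively to an inequality of the form $h(s) \lessgtr h(1-s)$ for a suitable monotone $h$, or to the sign of a single convex/concave function.

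The main obstacle I anticipate is controlling the term $\left(s^{1/p}+(1-s)^{1/p}\right)^{p^*}$: raising a sum to the power $p^*$ does not expand nicely, and a binomial-series expansion in $p^*$ converges only conditionally and has sign-alternating coefficients, so term-by-term comparison is delicate. To get around this I would use the power-mean / Bernoulli-type inequality $(a+b)^{p^*} \le 2^{p^*-1}(a^{p^*}+b^{p^*})$ for $p^* \ge 1$ (which holds precisely when $p^* \ge 1$, i.e. always, with the direction of convexity switching the bound's usefulness depending on whether $p\ge 2$), combined with $a^{p^*} = (s^{1/p})^{p^*} = s^{p^*/p} = s^{p^*-1}$. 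This converts the left side bound into $2^{p^*-1}(s^{p^*-1} + (1-s)^{p^*-1})$, and then the whole problem collapses to an inequality between $2^{p^*-1}(s^{p^*-1}+(1-s)^{p^*-1})$ and $1 + 2^{2/p}(s(1-s))^{1/p}$, which is a genuinely one-variable convexity statement amenable to checking the second derivative's sign or to a direct substitution $s = (1+\cos\phi)/2$. The sign of $p-2$ enters exactly through whether the convexity inequality used is tight in the correct direction; the reversal for $p \in (1,2]$ should then fall out by running the same chain with the inequality reversed (here one uses that $x \mapsto x^{p^*}$ has the opposite convexity behavior relative to the threshold, so the bound $(a+b)^{p^*}\ge \cdots$ or a refined two-term estimate applies). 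I would keep an eye on whether the single-variable reduction is sharp enough at $s=1/2$; if the crude power-mean step loses too much, the fallback is the direct analysis of $g'(s)$ sketched above, showing $g'$ has a single sign change on $(0,1/2)$ forced by the three equality points, which pins down the sign of $g$.
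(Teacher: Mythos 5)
Your reduction to the curve $u^p+v^p=1$ and the parametrization $u=s^{1/p}$, $v=(1-s)^{1/p}$ is fine, but note that it reproduces exactly the substitution the paper uses: after this step the statement still to be proved is precisely the inequality \eqref{eq:liebineq} of Carlen et al.\ \cite{Carlen2021} (the paper takes $\alpha=\sin_px/(\sin_px+\cos_px)$, which is your $s^{1/p}/(s^{1/p}+(1-s)^{1/p})$ up to normalization), and the paper does not reprove this inequality — it quotes it as a known, explicitly ``nontrivial'' result. So the entire burden of Theorem \ref{thm:elineq} sits in your one-variable claim
\begin{equation*}
\bigl(s^{1/p}+(1-s)^{1/p}\bigr)^{p^*}\le 1+2^{2/p}\bigl(s(1-s)\bigr)^{1/p}\qquad(p\ge2),
\end{equation*}
and that is exactly where your argument has a gap.

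Your primary route fails outright. From $(a+b)^{p^*}\le 2^{p^*-1}(a^{p^*}+b^{p^*})$ you would need the intermediate bound $2^{p^*-1}\bigl(s^{p^*-1}+(1-s)^{p^*-1}\bigr)\le 1+2^{2/p}(s(1-s))^{1/p}$; but at $s=0$ its left-hand side equals $2^{p^*-1}>1$ while its right-hand side equals $1$, and the original inequality is an \emph{equality} there, so the chain breaks near the endpoints for every $p\ge2$. The loss from the power-mean step occurs at $s=0,1$ (where $a\ne b$), not at $s=1/2$ as you anticipated, so no refinement of that step at the midpoint can rescue it; the analogous crude bounds in the reversed case $p\in(1,2]$ fail similarly. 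The fallback you sketch — that $g'$ has a single sign change on $(0,1/2)$ ``forced by the three equality points'' — is not forced by anything: $g(0)=g(1/2)=0$ gives a zero of $g'$ by Rolle's theorem but determines neither the number of sign changes of $g'$ nor the sign of $g$, and establishing that sign (including the degeneration to an identity at $p=2$, where both directions hold) is precisely the nontrivial content of \eqref{eq:liebineq}. As it stands, your proposal must either carry out that delicate one-variable analysis in full or, as the paper does, invoke the Carlen--Frank--Ivanisvili--Lieb inequality and conclude by the substitution you already identified.
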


The proof of Theorem \ref{thm:elineq} essentially owes to the following nontrivial inequality by Carlen et al. \cite{Carlen2021}:
\begin{equation}
\label{eq:liebineq}
1 \leq \left(1+\left(\frac{2\alpha^{p/2}(1-\alpha)^{p/2}}{\alpha^p+(1-\alpha)^p}\right)^{2/p}\right)^{p-1}(\alpha^p+(1-\alpha)^p)
\quad \mbox{for all $\alpha \in [0,1]$}
\end{equation}
when $p \in (0,1] \cup [2,\infty)$, with the reverse inequality when
$p \in (-\infty,0) \cup (1,2]$.
Theorem \ref{thm:elineq} above is obtained immediately by putting
\[\alpha=\frac{\sin_px}{\sin_px+\cos_px}\]
in \eqref{eq:liebineq}.

We can extend Theorem \ref{thm:elineq} for GJEFs with three parameters as follows.

\begin{thm}
Let $p, q, r>1,\ k \in [0,1)$ and $x \in [0,K_{p,q,r}(k)]$. 
Then, the following claims
$(\mathrm{i})$ and $(\mathrm{ii})$ hold true\,$:$
\begin{enumerate}
\item If $p \geq 2$,  then
\begin{equation*}
(\sn_{p,q,r}^{q/p}{(x,k)}+\cn_{p,q,r}{(x,k)})^{p^*}
\leq 1+2^{2/p}\sn_{p,q,r}^{q/p}{(x,k)}\cn_{p,q,r}{(x,k)}.
\end{equation*}
This inequality is reversed if $p \in (1,2]$.
\item If $q \geq 2$,  then
\begin{equation*}
(\sn_{p,q,r}{(x,k)}+\cn_{p,q,r}^{p/q}{(x,k)})^{q^*}
\leq 1+2^{2/q}\sn_{p,q,r}{(x,k)}\cn_{p,q,r}^{p/q}{(x,k)}.
\end{equation*}
This inequality is reversed if $q \in (1,2]$.
\end{enumerate}
\end{thm}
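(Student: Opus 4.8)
The plan is to reduce each of the two claims to the master inequality \eqref{eq:liebineq} of Carlen et al. by an appropriate substitution, exactly as in the one-parameter case of Theorem \ref{thm:elineq}, but now exploiting the generalized Pythagorean identity \eqref{eq:cs}, namely $\cn_{p,q,r}^p x + \sn_{p,q,r}^q x = 1$. The third parameter $r$ and the modulus $k$ never enter, so the proof is essentially two applications of the same one-variable inequality with different exponents.

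For claim (i), fix $x \in [0,K_{p,q,r}(k)]$ and set
\[
\alpha := \frac{\sn_{p,q,r}^{q/p}(x,k)}{\sn_{p,q,r}^{q/p}(x,k)+\cn_{p,q,r}(x,k)} \in [0,1].
\]
Then $1-\alpha = \cn_{p,q,r}(x,k)/(\sn_{p,q,r}^{q/p}(x,k)+\cn_{p,q,r}(x,k))$, and the key observation is that
\[
\alpha^p + (1-\alpha)^p = \frac{\sn_{p,q,r}^q(x,k)+\cn_{p,q,r}^p(x,k)}{(\sn_{p,q,r}^{q/p}(x,k)+\cn_{p,q,r}(x,k))^p} = \frac{1}{(\sn_{p,q,r}^{q/p}(x,k)+\cn_{p,q,r}(x,k))^p},
\]
using \eqref{eq:cs} for the numerator. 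Also $2\alpha^{p/2}(1-\alpha)^{p/2} = 2\sn_{p,q,r}^{q/2}(x,k)\cn_{p,q,r}^{p/2}(x,k)/(\sn_{p,q,r}^{q/p}(x,k)+\cn_{p,q,r}(x,k))^p$, so the ratio $2\alpha^{p/2}(1-\alpha)^{p/2}/(\alpha^p+(1-\alpha)^p)$ simplifies to $2\sn_{p,q,r}^{q/2}(x,k)\cn_{p,q,r}^{p/2}(x,k)$. Substituting into \eqref{eq:liebineq} for $p \ge 2$ gives
\[
1 \le \left(1 + \left(2\sn_{p,q,r}^{q/2}x\,\cn_{p,q,r}^{p/2}x\right)^{2/p}\right)^{p-1}\frac{1}{(\sn_{p,q,r}^{q/p}x+\cn_{p,q,r}x)^p},
\]
i.e. $(\sn_{p,q,r}^{q/p}x+\cn_{p,q,r}x)^p \le (1 + 2^{2/p}\sn_{p,q,r}^{q/p}x\,\cn_{p,q,r}x)^{p-1}$; raising to the power $1/(p-1) = p^*/p$ and recalling $p\cdot(1/(p-1)) = p^*$ yields the stated inequality, with the reverse for $p \in (1,2]$ since \eqref{eq:liebineq} reverses there. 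One must also check the boundary/degenerate cases where $\sn_{p,q,r}x$ or $\cn_{p,q,r}x$ vanishes (so $\alpha \in \{0,1\}$), where both sides equal $1$ and equality holds trivially.

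Claim (ii) is entirely symmetric: set instead
\[
\alpha := \frac{\sn_{p,q,r}(x,k)}{\sn_{p,q,r}(x,k)+\cn_{p,q,r}^{p/q}(x,k)},
\]
so that $\alpha^q + (1-\alpha)^q = (\sn_{p,q,r}^q x + \cn_{p,q,r}^p x)/(\sn_{p,q,r}x + \cn_{p,q,r}^{p/q}x)^q = (\sn_{p,q,r}x+\cn_{p,q,r}^{p/q}x)^{-q}$ again by \eqref{eq:cs}, and apply \eqref{eq:liebineq} with $q$ in place of $p$. I do not expect any real obstacle: the only mildly delicate point is bookkeeping the exponents so that the numerator of $\alpha^p+(1-\alpha)^p$ becomes exactly $\cn_{p,q,r}^p + \sn_{p,q,r}^q$ (this is precisely why the powers $q/p$ on $\sn$ in (i) and $p/q$ on $\cn$ in (ii) are the correct ones), and handling the endpoint cases separately. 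The whole argument is short and I would present it as two parallel substitutions into \eqref{eq:liebineq}.
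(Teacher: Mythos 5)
Your proposal is correct and follows essentially the same route as the paper: both claims are obtained by the substitutions $\alpha=\sn_{p,q,r}^{q/p}/(\sn_{p,q,r}^{q/p}+\cn_{p,q,r})$ and (up to the immaterial swap $\alpha\leftrightarrow 1-\alpha$, allowed by the symmetry of \eqref{eq:liebineq}) $\alpha=\cn_{p,q,r}^{p/q}/(\sn_{p,q,r}+\cn_{p,q,r}^{p/q})$ into the inequality of Carlen et al., using \eqref{eq:cs}; your write-up simply spells out the exponent bookkeeping that the paper leaves implicit.
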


\begin{proof}
(i) Suppose that $p \geq 2$. Letting
\[\alpha=\frac{\sn_{p,q,r}^{q/p}{(x,k)}}{\sn_{p,q,r}^{q/p}{(x,k)}+\cn_{p,q,r}{(x,k)}}, \quad x \in [0,K_{p,q,r}(k)],\]
in \eqref{eq:liebineq} gives the assertion.

(ii) Suppose that $q \geq 2$. Letting
\[\alpha=\frac{\cn_{p,q,r}^{p/q}{(x,k)}}{\sn_{p,q,r}{(x,k)}+\cn_{p,q,r}^{p/q}{(x,k)}}, \quad x \in [0,K_{p,q,r}(k)],\]
in \eqref{eq:liebineq} with $p$ replaced by $q$ gives the assertion.
\end{proof}

In particular, letting $k=0$, we obtain the following inequalities
for GTFs with two parameters.
Moreover, letting $p=q$ implies Theorem \ref{thm:elineq}.

\begin{cor}
\label{cor:s+c}
Let $p, q>1$ and $x \in [0,\pi_{p,q}/2]$. 
Then, the following claims
$(\mathrm{i})$ and $(\mathrm{ii})$ hold true\,$:$
\begin{enumerate}
\item If $p \geq 2$,  then
\begin{equation*}
(\sin_{p,q}^{q/p}{x}+\cos_{p,q}{x})^{p^*}
\leq 1+2^{2/p}\sin_{p,q}^{q/p}{x}\cos_{p,q}{x}.
\end{equation*}
This inequality is reversed if $p \in (1,2]$.
\item If $q \geq 2$,  then
\begin{equation*}
(\sin_{p,q}{x}+\cos_{p,q}^{p/q}{x})^{q^*}
\leq 1+2^{2/q}\sin_{p,q}{x}\cos_{p,q}^{p/q}{x}.
\end{equation*}
This inequality is reversed if $q \in (1,2]$.
\end{enumerate}
\end{cor}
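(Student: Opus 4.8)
The plan is to obtain Corollary~\ref{cor:s+c} as the special case $k=0$ of the preceding (three‑parameter) theorem on GJEFs. First I would invoke the identities recorded in Section~\ref{sec:preparation}: for $x\in(0,K_{p,q,r}(k))$ one has $\sn_{p,q,r}(x,0)=\sin_{p,q}x$, $\cn_{p,q,r}(x,0)=\cos_{p,q}x$, and $\dn_{p,q,r}(x,0)=1$; moreover, since $H_{p,q,r}(\cdot,0)=F_{p,q}$, we have $K_{p,q,r}(0)=\pi_{p,q}/2$, so the interval $[0,K_{p,q,r}(0)]$ is exactly $[0,\pi_{p,q}/2]$. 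Substituting $k=0$ into the two inequalities of that theorem, the left‑ and right‑hand sides turn into the corresponding expressions in $\sin_{p,q}$ and $\cos_{p,q}$, and the hypotheses together with the direction of each inequality ($p\ge 2$ versus $p\in(1,2]$ in (i), and $q\ge 2$ versus $q\in(1,2]$ in (ii)) carry over verbatim. The endpoints $x=0$ and $x=\pi_{p,q}/2$ are admissible because $\sin_{p,q}$ and $\cos_{p,q}$ extend continuously there and, at those endpoints, the auxiliary parameter $\alpha$ used in the proof of the theorem equals $0$ or $1$, which still lies in $[0,1]$, the full range for which~\eqref{eq:liebineq} holds.

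For the concluding observation that $p=q$ recovers Theorem~\ref{thm:elineq}, I would note that then $q/p=p/q=1$, so $\sin_{p,q}^{q/p}x=\sin_{p}x$ and $\cos_{p,q}^{p/q}x=\cos_{p}x$, while $\pi_{p,q}/2=\pi_p/2$, $p^*=q^*$, and $2^{2/p}=2^{2/q}$; hence each of (i) and (ii) collapses to the single inequality $(\sin_p x+\cos_p x)^{p^*}\le 1+2^{2/p}\sin_p x\cos_p x$, with the stated reversal for $p\in(1,2]$, which is precisely Theorem~\ref{thm:elineq}.

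Since all the analytic substance already resides in~\eqref{eq:liebineq} and in the substitutions of $\alpha$ performed in the proof of the three‑parameter theorem, there is no genuine obstacle here; the only point deserving a line of justification is the legitimacy of letting $k\to 0$ and of including the endpoints of the interval, i.e. verifying that the relevant value of $\alpha$ remains in $[0,1]$ and that the generalized trigonometric functions are continuous up to $x=0$ and $x=\pi_{p,q}/2$.
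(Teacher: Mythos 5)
Your proposal is correct and matches the paper's route exactly: the corollary is obtained by setting $k=0$ in the three-parameter theorem, using $\sn_{p,q,r}(x,0)=\sin_{p,q}x$, $\cn_{p,q,r}(x,0)=\cos_{p,q}x$ and $K_{p,q,r}(0)=\pi_{p,q}/2$, with the $p=q$ specialization recovering Theorem \ref{thm:elineq}. Your extra remarks on endpoint continuity and the admissible range of $\alpha$ in \eqref{eq:liebineq} are harmless refinements of the same argument.
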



\section*{Acknowledgments}
The authors would like to thank Professor David Edmunds for the information on Theorem \ref{thm:elineq}.
The authors would also like to thank the anonymous referees for their help in improving the quality of the paper.



\end{document}